\newtheorem{theorem}{Theorem}[section]
\newtheorem{proposition}{Proposition}[section]
\newtheorem{lemma}{Lemma}[section]
\newtheorem{corollary}{Corollary}[section]
\theoremstyle{definition}
\newtheorem{definition}{Definition}[section]
\newtheorem{example}{Example}[section]
\theoremstyle{remark}
\newtheorem{remark}{Remark}[section]
\newcommand{\R}{\mathbb{R}}
\newcommand{\s}{\mathbb{S}}
\newcommand{\LL}{\mathcal{L}}
\newcommand{\CC}{\mathcal{C}}
\newcommand{\n}{\nabla}
\newcommand{\ran}{\rangle}
\newcommand{\lan}{\langle}
\newcommand{\ve}{\varepsilon}
\newcommand{\sss}{\sigma}
\newcommand\raisepunct[1]{\,\mathpunct{\raisebox{0.5ex}{#1}}}
\DeclareMathOperator{\hess}{hess}
\DeclareMathOperator{\hesss}{Hess}
\DeclareMathOperator{\di}{div}
\DeclareMathOperator{\tr}{trace}
\DeclareMathOperator{\dist}{dist}
\numberwithin{equation}{section}
\title[Gap theorems for self-shrinkers of $r$-mean curvature flows]{Gap theorems for complete self-shrinkers\\ of $r$-mean curvature flows}
\author{Hil\'ario Alencar, G. Pacelli Bessa \and Greg\'orio Silva Neto}
\date{\today}
\dedicatory{Dedicated to Paolo Piccione by the occasion of his 60th birthday}
\address{Instituto de Matem\'atica,
Universidade Federal de Alagoas,
Macei\'o, 57072-900, Brazil}
\email{hilario@mat.ufal.br}
\address{Departamento de Matem\'atica,
Universidade Federal do Ceará,
Fortaleza, 60455-760, Brazil}
\email{bessa@mat.ufc.br}
\address{Instituto de Matem\'atica,
Universidade Federal de Alagoas,
Macei\'o, 57072-900, Brazil}
\email{gregorio@im.ufal.br}
\begin{document}
\footnotetext{The authors were partially supported by the National Council for Scientific and Technological Development - CNPq of Brazil [Grant: 303118/2022-9 to H. Alencar,
303057/2018-1, 402563/2023-9 to G. P. Bessa and 304032/2020-4 to G. Silva Neto]. G. Silva Neto was also partially suported by the Alagoas Research Foundation [Grant: E:60030.0000000161/2022].}

%\footnotetext{H. Alencar, G. Pacelli Bessa, and G. Silva Neto were partially supported by the National Council for Scientific and Technological Development - CNPq of Brazil. G. Silva Neto was also partially suported by the Alagoas Research Foundation.}
\keywords{self-shrinkers, extrinsic geometric flows,  mean curvature flow, gap theorems}
\subjclass[2020]{53E10, 53C42, 53E40, 53C21}
\begin{abstract}
 In this paper, we prove gap results for complete self-shrinkers of the $r$-mean curvature flow involving a modified second fundamental form. These results extend previous results for self-shrinkers of the mean curvature flow due to Cao-Li and Cheng-Peng. To prove our results we show that, under suitable curvature bounds, proper self-shrinkers are parabolic for a certain second-order differential operator which generalizes the drifted Laplacian and, even if is not proper, this differential operator satisfies an Omori-Yau type maximum principle.
\end{abstract}

\maketitle

\section{Introdution and main results}
\noindent Let $X\colon\Sigma^n \rightarrow \mathbb{R}^{n+1}$ be an isometric immersion of a $n$-dimensional Riemannian manifold into the Euclidean space  $\R^{n+1}$. Let $A\colon T\Sigma^n \to T\Sigma^n$ be its shape operator given by $A(Y)=-\overline{\nabla}_{Y}N,$ $Y\in T\Sigma^n,$ where $N$ is a locally defined normal vector field on $\Sigma^n$ and $\overline{\nabla}$ is the Levi-Civita connection of $\mathbb{R}^{n+1}.$ The shape operator $A$ is symmetric and its eigenvalues $k_1, \ldots, k_{n}$ are the principal curvatures of the hypersurface $\Sigma^n$. The elementary symmetric functions of the principal curvatures, called the $r$-mean curvatures of $\Sigma$, are defined by
\begin{equation}\label{eq1.1}
 \left\{\begin{aligned}
 \sss_0&=1,\\ 
 \sss_r&=\displaystyle{\sum_{i_1<\cdots<i_r}k_{i_1}\cdots k_{i_r}},\quad \mbox{for}\quad 1\leq r\leq n,\\
 \sss_r&=0,\,\,\mbox{for}\,\, r>n.\\
\end{aligned}\right.
\end{equation}
These functions appear naturally in the characteristic polynomial of $A,$ since
\begin{equation*}\label{char}
\det(A-tI)= \sss_n-\sss_{n-1}t+\sss_{n-2}t^2 -\cdots +(-1)^nt^n = \sum_{j=0}^n(-1)^j\sss_{n-j}t^j. 
\end{equation*}
Observe that
\[\sss_1=k_1+k_2+\cdots+k_n, \ \sss_2=\sum_{i<j}k_ik_j, \ \mbox{and}\ \sss_n=k_1k_2\cdots k_n
\]
are respectively  the mean curvature $H$, the scalar curvature, and the Gauss-Kronecker curvature $K$.
%The symmetric Newton polynomials of the principal curvatures $k_1,k_2,\ldots,k_n$ of $\Sigma^n$ are given by
%\begin{equation}\label{Sr}
%\begin{cases}
%\ss\sss_0&=1\\
%\sss_r&=\displaystyle{\sum_{i_1<\cdots<i_r}k_{i_1}\cdots %k_{i_r}},\quad \mbox{for}\quad 1\leq r\leq n,\\
%\sss_r&=0\quad\mbox{for}\quad r>n.
%\end{cases}
%\end{equation}
In this article, we will assume that $\Sigma^n$ has a continuous globally defined unit normal vector field $N$.

A family of immersions $X\colon\Sigma^n\times[0,T)\rightarrow\R^n$ is said to be a solution of the $r$-mean curvature flow if satisfies the initial value problem
\begin{equation}\label{flow}
\left\{\begin{aligned}
\dfrac{\partial X}{\partial t}(x,t)&=\sss_r(k_1(x,t),\ldots,k_n(x,t))N(x,t),\\
X(\cdot\,,0)&=\,X_0.
\end{aligned}\right.
\end{equation} 
Here, $k_1(x,t),\ldots,k_n(x,t)$ are the principal curvatures of the immersions $X_t\colon=X(\cdot,t),$ $N(\cdot,t)$ are their normal vector fields. We are adopting the convention on the normal $N$ such that in the spheres and in every closed hypersurface the normal points inward (i.e., in the direction of the region bounded by the hypersurface). With this convention, in the $n$-dimensional round sphere $\s^n(R)$ of radius $R,$ $X=-RN,$ the principal curvatures are positive and, for spheres and cylinders $\s^m(R)\times\R^{n-m}$, $1\leq m\leq n-1,$ the flow contracts.

The $r$-mean curvature flow is a natural generalization of the well-known mean curvature flow ($r=1$) and the Gaussian curvature flow ($r=n$) that has been widely investigated in the last four decades. Beside these cases,  the $r$-mean curvature flow can be found in the works of several authors, as \cite{AS2010}, \cite{AMC2012},  \cite{AW2021}, \cite{BS2018}, \cite{CRS2010}, \cite{Chow1987}, \cite{GLM2018},  \cite{GLW2017},   \cite{LWW2021},  \cite{LSW2020}, \cite{Urbas1990}, \cite{Urbas1999} and  \cite{Z2013}.

A solution $X(\cdot, t)$ of \eqref{flow} is said homothetic if there exists a  positive $\mathcal{C}^1$-function $\phi\colon [0,T)\to\R$ such that $\phi(0)=1$ and
\begin{equation}\label{pre-SS}
X (x,t)=\phi(t)X_0(x),\,\, \forall x \in \Sigma^n.
\end{equation}
If $\phi$ is a decreasing function, $\Sigma^n$ shrinks homothetically under the action of the flow, then $\Sigma^n$ is called a self-shrinker. It can be easily proven, after rescaling, that if $\Sigma^n$ is a self-shrinker of the $r$-mean curvature flow, then the $r^{\rm th}$-symmetric function $\sigma_r$  of  $\Sigma^n$ satisfies the equation
\begin{equation}\label{self-S}
\sss_r=-\lan X,N\ran, \quad 1\leq r \leq n,
\end{equation}
where $X$ is the position vector of $\Sigma^n$ in $\R^{n+1}.$

To state the results of this paper, we recall the definition of the Newton transformations, which can be understood as a natural generalization of the second fundamental form related to the symmetric functions $\sss_r$. Inspired by the characteristic polynomial of $A$ we define the $r$-th Newton transformation $P_r\colon T\Sigma^n\to T\Sigma^n$, $0\leq r\leq n,$ as the polynomial
\begin{equation}\label{Pr-pol}
    P_r = \sss_rI-\sss_{r-1}A+\sss_{r-2}A^2 -\cdots +(-1)^rA^r = \sum_{j=0}^r(-1)^j\sss_{r-j}A^j,
\end{equation}
where we are setting $P_0=I.$ It can be  seen that the Newton transformations satisfy the recurrence relation 
\begin{equation}\label{Pr-rec}
P_r=\sss_r I - P_{r-1}A,\quad 1\leq r\leq n,
\end{equation}
and, by the Cayley-Hamilton theorem, we have that $P_n=0.$

%The Newton tensors $P_r\colon TM \to TM$ are defined setting $P_0=I$ and $P_r=\sss_r I - AP_{r-1},$ $1\leq r\leq n,$ so that $P_r(x)\colon T_{x}M \to T_{x}M$ is a self-adjoint linear operator with the same eigenvectors as $A$. Developing this recurrence, we can  write $P_r$ as a polynomial
%\begin{equation}\label{Pr-pol}
%    P_r = \sss_rI-\sss_{r-1}A+\sss_{r-2}A^2 -\cdots +(-1)^rA^r = \sum_{j=0}^r(-1)^j\sss_{r-j}A^j, \quad 0\leq r\leq n.
%\end{equation}

In the context of Differential Geometry, the Newton transformations $P_r$ first appeared in the work of Reilly \cite{Reilly}, in the expressions of the variational integral formulas for functions  $f(\sss_0,\ldots, \sss_n)$ of the elementary symmetric functions $\sss_i$'s.
Since we are assuming that $\Sigma^n$ has a global choice of $N$ we have that $P_r$ globally defined. 

In the following, we present some basic examples of self-shrinkers of the $r$-mean curvature flow.

\begin{example}\label{example}
Hyperplanes passing through the origin, the round sphere $\s^n\left(\delta_n(r)\right)$ of radius $\delta_n(r)=\binom{n}{r}^{\frac{1}{r+1}},$ and the cylinders $\s^{m}\left(\delta_m(r)\right)\times\R^{n-m}$ in $\R^{n+1},$ $r\leq m\leq n-1,$ are self-shrinkers of the $r$-mean curvature flow. For hyperplanes passing through the origin, we have the $r^{\ rm th}$-symmetric function $\sss_r=0=-\lan X,N\ran.$ On the other hand, in $\s^{m}\left(\delta_m(r)\right)\times\R^{n-m},$  $m\in\{0,1,\ldots, n\}$,  the principal curvatures are $k_1=(1/\delta_{m}(r))=\binom{m}{r}^{-\frac{1}{r+1}}$ with multiplicity $m$ and $k_2=0$ with multiplicity $n-m.$ This gives that
\begin{equation}\label{sigma-p}
\sss_p=\binom{m}{p}\binom{m}{r}^{-\frac{p}{r+1}}, \quad 0\leq p \leq n,
\end{equation}
where we are using the convention that $\binom{m}{k}=0$ if $k>m.$ Therefore,
\begin{equation}\label{SS-cyl}
\sss_r=\binom{m}{r}\binom{m}{r}^{-\frac{r}{r+1}}=\binom{m}{r}^{\frac{1}{r+1}}=-\lan X,N\ran,
\end{equation}
since $\lan X, N\ran$ equals the negative radius in spheres and cylinders. Notice that \eqref{SS-cyl} holds only for $r\leq m\leq n.$ Indeed, if $m<r,$ then $\sigma_r=0$, and thus, the respective cylinder does not satisfy the self-shrinker equation.
\end{example}

%\begin{remark}

 Observe that for each $x\in \Sigma^n$ the linear operator $P_{r-1}(x)\colon T_x\Sigma^n \to T_x\Sigma^n$ is symmetric hence  $T_x\Sigma^n$ has a basis formed with eigenvectors of $P_{r-1}(x)$ associated to eigenvalues $\lambda_1(x)\leq  \lambda_2(x)\leq  \ldots \leq  \lambda_n(x)$.  Moreover, since  $P_{r-1}$  is a polynomial in $A$ we have that $AP_{r-1}=P_{r-1}A$ and $A$ and $P_r-1$ are simultaneously diagonalizable. The operator $P_{r-1}$ is positive semidefinite if   $\lambda_i(x)\geq 0$, $\forall\, x\in \Sigma^n$. The square root of $P_{r-1},$ as the only linear operator $\sqrt{P_{r-1}}\colon T\Sigma^n\to T\Sigma^n$ such that $(\sqrt{P_{r-1}})^2=P_{r-1}.$  Let $\{e_1,\ldots,e_n\}\subset T\Sigma^n$ be an orthonormal frame of eigenvectors of $A$ corresponding to the eigenvalues $\{k_1, k_2,\ldots, k_n\}$. Letting $A_i\colon e_i^{\perp} \to e_i^{\perp}$ to be the restriction of $A$ to $e_i^{\perp}$,  $i=1,\ldots,n$,  we have that  the eigenvalues $\lambda_i$ of $P_{r-1}$ are  the symmetric functions $\lambda_i=\sss_{r-1}(A_i)=\sss_{r-1}(k_1,k_2,\ldots, k_{i-1}, k_{i+1}, \ldots, k_n)$ associated to $A_i,$ see \cite{BC}, p.279. This gives that $\sqrt{\sss_{r-1}(A_i)},$ $i=1,\ldots,n,$ are the eigenvalues of $\sqrt{P_{r-1}}$. 
 
 In our main results, we will consider gap theorems involving the trace norm of the modified second fundamental form $\sqrt{P_{r-1}}A.$ 
\begin{eqnarray}
\Vert\sqrt{P_{r-1}}A\Vert^2&=& \tr\left((\sqrt{P_{r-1}}A)^{t}\cdot\sqrt{P_{r-1}}A)\right)= \tr\left(P_{r-1}A^2\right) \nonumber \\
&=& \sum_{j=1}^{n}\langle P_{r-1}A^2(e_j), e_j\rangle=\sum_{j=1}^n\sss_{r-1}(A_j)k_j^2.\nonumber 
\end{eqnarray}
Here $(\sqrt{P_{r-1}}A)^{t}=(\sqrt{P_{r-1}}A)$ since the operator $\sqrt{P_{r-1}}A$ is symmetric.
The quantity $\|\sqrt{P_{r-1}}A\|^2$ is quite natural in Differential Geometry in the context of $\sss_r.$ It appears in the formula of the second variation of $\int_\Sigma \sss_r d\Sigma,$ see \cite{AdCE}, p.207, Proposition 4.4, p.284 of \cite{BC}, and Theorem B, p.407 of \cite{Reilly}. It also appears in the definition of $r$-special hypersurface in \cite{AdCE}, p.203-204, as well as in the gap theorems of Alencar, do Carmo and Santos, see \cite{AdCS}, and Alias, Brasil and Sousa, see \cite{ABS}.

In the next, we calculate $\|\sqrt{P_{r-1}}A\|^2$ for the basic examples. Clearly, hyperplanes satisfy $\|\sqrt{P_{r-1}}A\|^2=0.$ In $\s^{m}\left(\delta_{m}(r)\right)\times\R^{n-m},$ for $r\leq m\leq n,$ we have, using Lemma 2.1, p.279 of \cite{BC}, and \eqref{sigma-p},
\[
\begin{aligned}
\|\sqrt{P_{r-1}}A\|^2&=\tr(P_{r-1}A^2)\\
&=\sss_1\sss_r - (r+1)\sss_{r+1}\\
&=\binom{m}{1}\binom{m}{r}^{-\frac{1}{r+1}}\binom{m}{r}\binom{m}{r}^{-\frac{r}{r+1}}-(r+1)\binom{m}{r+1}\binom{m}{r}^{-1}\\
&=m-(m-r)=r,
\end{aligned}
\]
where we used that $(r+1)\binom{m}{r+1}=(m-r)\binom{m}{r}$ and the convention that $\binom{m}{r}=0$ if $r>m.$
%\end{remark}
The first result of this paper is the following gap theorem.

\begin{theorem}\label{theo-main-1}
Let $\Sigma^n$ be a complete, $n$-dimensional, properly immersed, self-shrinker of the $r$-mean curvature flow in $\R^{n+1},$ $1\leq r\leq n$. Suppose the $(r-1)$-th Newton transformation $P_{r-1}$ is positive semidefinite, bounded, and satisfies
\[
\|\sqrt{P_{r-1}}A\|^2\leq r,
\]
then $\Sigma^n$ is 
\begin{itemize}
\item[(i)] a hyperplane in $\R^{n+1}_{\raisepunct{,}}$ if $\|\sqrt{P_{r-1}}A\|^2<r;$
\item[(ii)] the round sphere $\s^n\left(\delta_{n}(r)\right)$ or the cylinder $\Sigma^n=\s^{m}\left(\delta_{m}(r)\right)\times\R^{n-m}$ in $\R^{n+1},$ $r\leq m\leq n-1,$ provided $P_{r-1}$ is positive definite. Here $\delta_{m}(r)=\binom{m}{r}^{\frac{1}{r+1}}$.
\end{itemize}
\end{theorem}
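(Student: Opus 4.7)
The plan is to run the Cao--Li/Cheng--Peng paradigm with the drifted Laplacian replaced by the $P_{r-1}$-weighted operator
\[
\mathcal{L}_{r-1} f \,=\, \operatorname{tr}\bigl(P_{r-1}\hesss f\bigr) \,-\, \lan X, \nabla f\ran,
\]
which reduces to the classical drifted Laplacian $\Delta - \lan X,\nabla\cdot\ran$ when $r=1$. Two ingredients are needed: (a) parabolicity of the properly immersed self-shrinker $\Sigma^n$ for $\mathcal{L}_{r-1}$, and (b) a Simons-type inequality yielding $\mathcal{L}_{r-1}\|\sqrt{P_{r-1}}A\|^2\ge 0$ under the hypothesis $\|\sqrt{P_{r-1}}A\|^2\le r$.

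First I would verify that $\mathcal{L}_{r-1}$ is a symmetric diffusion for the Gaussian weight $e^{-|X|^2/2}$: since $\di P_{r-1}=0$ in $\R^{n+1}$, one has
\[
\mathcal{L}_{r-1} f \,=\, e^{|X|^2/2}\di\bigl(e^{-|X|^2/2}P_{r-1}\nabla f\bigr).
\]
Combined with properness and the boundedness and positive semidefiniteness of $P_{r-1}$, an exhaustion by extrinsic balls together with Gaussian capacity estimates should yield $\mathcal{L}_{r-1}$-parabolicity: every bounded $\mathcal{L}_{r-1}$-subharmonic function is constant. This is the auxiliary statement announced in the abstract, which I would extract as a preliminary lemma. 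The heart of the proof is then a Simons-type identity of the schematic form
\[
\tfrac{1}{2}\mathcal{L}_{r-1}\|\sqrt{P_{r-1}}A\|^2 \,=\, \mathcal{G} + \|\sqrt{P_{r-1}}A\|^2\bigl(r-\|\sqrt{P_{r-1}}A\|^2\bigr) + \mathcal{R},
\]
where the gradient term $\mathcal{G}$ and the remainder $\mathcal{R}$ are both non-negative under $P_{r-1}\ge 0$. Its derivation starts from the self-shrinker equation $\sss_r = -\lan X, N\ran$ and invokes the Codazzi identity, the commutation $AP_{r-1}=P_{r-1}A$, and the Reilly-type trace identities $\tr(P_{r-1}A)=r\sss_r$ and $\tr(P_{r-1})=(n-r+1)\sss_{r-1}$.

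Under the hypothesis $\|\sqrt{P_{r-1}}A\|^2\le r$, the right-hand side of the Simons identity is non-negative, so $u:=\|\sqrt{P_{r-1}}A\|^2$ is a bounded $\mathcal{L}_{r-1}$-subharmonic function and parabolicity forces $u\equiv c$ constant with $\mathcal{G}\equiv 0$ and $\mathcal{R}\equiv 0$. In case~(i), if $u<r$ pointwise then $c$ cannot be positive---otherwise $u(r-u)>0$ would contradict $\mathcal{L}_{r-1}u=0$---so $u\equiv 0$; hence $\sss_r=\tfrac{1}{r}\tr(P_{r-1}A)=0$, the self-shrinker equation gives $\lan X,N\ran\equiv 0$, and the standard argument that a properly immersed complete hypersurface with everywhere tangential position vector is a cone, hence a hyperplane through the origin by smoothness, concludes case~(i). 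In case~(ii), with $P_{r-1}$ positive definite and equality $u\equiv r$, the vanishing of the gradient term $\mathcal{G}$ promotes to $\nabla A\equiv 0$, so the shape operator is parallel; the classical classification of complete Euclidean hypersurfaces with parallel second fundamental form, matched against the evaluation of $\|\sqrt{P_{r-1}}A\|^2$ on spheres and cylinders carried out just before the theorem, identifies $\Sigma^n$ with one of the listed models. I expect the main obstacle to be the Simons identity itself: isolating a clean $u(r-u)$ coefficient and ensuring that the remainder $\mathcal{R}$ has the right sign from only $P_{r-1}\ge 0$ is where most of the technical work will lie.
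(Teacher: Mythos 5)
Your high-level scaffolding matches the paper exactly: (a) establish that $\mathcal{L}_{r-1}$ enjoys a parabolicity property on a properly immersed complete self-shrinker, and (b) feed a bounded $\mathcal{L}_{r-1}$-subharmonic function into it. But the substance of both steps differs from what the paper actually does, and in both cases the paper's route is easier and the route you sketch has unresolved gaps.

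For step (a), the claimed Gaussian divergence form is not correct. From $L_{r-1}f=\di(P_{r-1}\nabla f)$ one gets
\[
e^{|X|^2/2}\di\bigl(e^{-|X|^2/2}P_{r-1}\nabla f\bigr)=\di(P_{r-1}\nabla f)-\bigl\langle\nabla\tfrac{|X|^2}{2},\,P_{r-1}\nabla f\bigr\rangle,
\]
whose drift is $\langle\nabla(|X|^2/2),P_{r-1}\nabla f\rangle$, \emph{not} the term $\langle X,\nabla f\rangle=\langle\nabla(|X|^2/2),\nabla f\rangle$ appearing in $\mathcal{L}_{r-1}$; these agree only when $P_{r-1}=I$. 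So $\mathcal{L}_{r-1}$ is not self-adjoint for the Gaussian weight, and the capacity argument you envision does not apply without modification. The paper instead proves a Khasminskii-type criterion directly for this non-divergence drift operator (Propositions \ref{prop-3} and \ref{parab}), using $\gamma=\|X\|^2$ and the Alencar--Colares formula for $L_{r-1}\|X\|^2$.

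For step (b), the Simons-type identity for $\|\sqrt{P_{r-1}}A\|^2$ that you postulate --- with a clean $u(r-u)$ coefficient, a non-negative gradient term $\mathcal{G}$, and a non-negative remainder $\mathcal{R}$ --- is nowhere derived, and your own remark that this is ``where most of the technical work will lie'' is an understatement: there is no reason the curvature terms arising from differentiating $P_{r-1}A$ twice organize themselves with the required sign under $P_{r-1}\geq 0$ alone. The paper sidesteps this entirely by applying $\mathcal{L}_{r-1}$ not to $\|\sqrt{P_{r-1}}A\|^2$ but to $\sss_r^2=\langle X,N\rangle^2$. The self-shrinker equation $\sss_r=-\langle X,N\rangle$ together with the known first-order formula for $L_{r-1}\langle X,N\rangle$ (Alencar--Colares, Lemma 2) yields
\[
\tfrac{1}{2}\mathcal{L}_{r-1}\sss_r^2=\sss_r^2\bigl[r-\|\sqrt{P_{r-1}}A\|^2\bigr]+\bigl\langle P_{r-1}(\nabla\sss_r),\nabla\sss_r\bigr\rangle,
\]
in which both terms are manifestly non-negative under the hypotheses --- no Simons computation required. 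The boundedness of $\sss_r^2$ then comes from Cauchy--Schwarz, $[\tr(P_{r-1}A)]^2\le\tr(P_{r-1})\tr(P_{r-1}A^2)$, and the boundedness of $P_{r-1}$. Finally, for the rigidity in case (ii), the paper does not pass through parallelism of $A$: constancy of $\sss_r$ gives constant support function $\langle X,N\rangle$, and the theorem of Dajczer--Tojeiro classifies such complete hypersurfaces as $\s^m(R)\times\R^{n-m}$; the self-shrinker equation then pins the radius. Your route via $\nabla A\equiv 0$ would require $\mathcal{G}$ to dominate a full $\|\nabla A\|^2$-type term, which again is not established. In short: the two pivotal identities you would need are both unproven, and the paper circumvents them with substantially simpler computations.
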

%\begin{remark}
%This is not the first time where the quantity $\|\sqrt{P_{r-1}}A\|^2$ appears in a gap theorem. It appeared, for example, in the results of Alencar, do Carmo and Santos, see \cite{AdCS}, and Alias, Brasil and Sousa, see \cite{ABS}. It also appears in the definition of $r$-special hypersurface in \cite{AdCE}, p.203-204. Moreover, it is a natural quantity in Differential Geometry, since it appears naturally in the formula of the second variation of the mixed volume $\int_\Sigma \sss_r d\Sigma,$ see \cite{AdCE}, p.207, Proposition 4.4, p.284 of \cite{BC}, and Theorem B, p.407 of \cite{Reilly}.
%\end{remark}
\begin{remark}Observe that for $r=1$, Theorem \ref{theo-main-1} is exactly  Cao and Li's result for hypersurfaces, see Theorem 1.1 of \cite{cao-li}, since $P_{r-1}=I$ is positive definite, bounded, $\|A\|^2\leq 1,$ and, for self-shrinkers of the mean curvature flow, properness is equivalent to have polynomial volume growth, see Theorem 1.3 of \cite{Cheng-Zhou}.
\end{remark}
%\textcolor{red}{For $r=1$, we obtain  Moreover, as we can see in the Remark \ref{remark-cao-li}, p.\pageref{remark-cao-li}, in the case $r=1$ it is not necessary to assume $\|A\|^2<1$ to obtain the hyperplane. N\~{a}o est\'{a} claro.} %notice that the situation in the item (iv) of Theorem \ref{theo-main-1} can not happens, since $P_{r-1}=I$ is always positive definite. 

\begin{corollary}[Cao-Li for hypersurfaces, \cite{cao-li}]\label{cao-li}
If $X\colon\Sigma^n\rightarrow\R^{n+1}$ is a complete  $n$-dimensional self-shrinker of the mean curvature flow, without boundary and with polynomial volume growth, and satisfies
\[
\|A\|^2\leq 1,
\]
%then $\Sigma^n=\s^m(\sqrt{m})\times\R^{n-m},$ $0\leq m\leq n.$ 
then it is one of the following:
\begin{itemize}
\item[(i)] a round sphere $\s^n(\sqrt{n})$ in $\R^{n+1};$
\item[(ii)] a cylinder $\s^m(\sqrt{m})\times\R^{n-m}$ in $\R^{n+1},$ $1\leq m\leq n-1;$
\item[(iii)] a hyperplane in $\R^{n+1}.$
\end{itemize}
In particular, if $\|A\|^2<1,$ then $\Sigma^n$ is a hyperplane. Here, $\|A\|^2$ is the squared norm of the second fundamental form of $\Sigma^n.$
\end{corollary}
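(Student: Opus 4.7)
The plan is to derive this corollary as the specialization $r=1$ of Theorem~\ref{theo-main-1}. First I would verify the translation of hypotheses. Taking $r=1$, the Newton transformation $P_{r-1}=P_0=I$, which is trivially positive definite and bounded (all eigenvalues equal $1$); moreover $\sqrt{P_0}\,A=A$, so $\|\sqrt{P_{r-1}}A\|^2=\|A\|^2$ and the pinching assumption $\|\sqrt{P_{r-1}}A\|^2\leq r$ becomes exactly $\|A\|^2\leq 1$. The radii $\delta_m(r)=\binom{m}{r}^{1/(r+1)}$ collapse, for $r=1$, to $\delta_m(1)=\sqrt{m}$, matching the values $\sqrt{n}$ and $\sqrt{m}$ appearing in the conclusions (i) and (ii) of the corollary.

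Second, I would bridge the apparent mismatch between \emph{proper} immersion (hypothesis of Theorem~\ref{theo-main-1}) and \emph{polynomial volume growth} (hypothesis of the corollary). For self-shrinkers of the classical mean curvature flow these two properties are equivalent by Theorem~1.3 of Cheng--Zhou \cite{Cheng-Zhou}, as already indicated in the remark preceding the corollary. Under polynomial volume growth, $\Sigma^n$ is therefore proper in $\R^{n+1}$, so Theorem~\ref{theo-main-1} applies.

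Finally, I would simply read off the trichotomy from Theorem~\ref{theo-main-1}. If $\|A\|^2<1$ everywhere, conclusion (i) of Theorem~\ref{theo-main-1} identifies $\Sigma^n$ with a hyperplane through the origin, which is case (iii) of the corollary and also matches the last sentence. Otherwise equality $\|A\|^2=1$ is attained somewhere; since $P_{r-1}=I$ is automatically positive definite, conclusion (ii) of Theorem~\ref{theo-main-1} yields either the round sphere $\s^n(\sqrt{n})$ or a cylinder $\s^m(\sqrt{m})\times\R^{n-m}$ with $1\leq m\leq n-1$, which are cases (i) and (ii) of the corollary. Since all work is in Theorem~\ref{theo-main-1} and the Cheng--Zhou equivalence, there is no substantive obstacle in this reduction; the only care needed is to observe that the positive definiteness hypothesis of part~(ii) of Theorem~\ref{theo-main-1} is automatic when $r=1$, so no extra assumption needs to be imposed on $\Sigma^n$.
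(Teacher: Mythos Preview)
Your proposal is correct and follows exactly the paper's own argument: the remark immediately preceding the corollary (together with Remark~\ref{remark-cao-li}) derives it by specializing Theorem~\ref{theo-main-1} to $r=1$, noting that $P_0=I$ is positive definite and bounded, and invoking the Cheng--Zhou equivalence between properness and polynomial volume growth. The only cosmetic difference is that you phrase the dichotomy as ``$\|A\|^2<1$ everywhere versus equality attained somewhere,'' whereas the paper simply applies case~(ii) directly (since $P_0$ is positive definite) to get $\sigma_1^2$ constant and then reads off all three possibilities from the Dajczer--Tojeiro classification.
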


For $r=n,$ the hypersurface  $\Sigma^n$ is a self-shrinker of the Gaussian curvature flow. By \eqref{Pr-rec}, it holds $KI=P_{n-1}A,$ i.e., 
\begin{equation}\label{gauss-eigen}
K=\langle K e_i, e_i\rangle =\langle P_{n-1}A e_i, e_i\rangle =k_i\sss_{n-1}(A_i),
\end{equation}
for every $i=1,\ldots,n.$

We claim that $P_{n-1}$ be positive semidefinite is equivalent to a choice of orientation when $\Sigma^n$ is weakly convex, meaning, $A$ is positive semidefinite. Indeed, by \eqref{gauss-eigen}, if $P_{n-1}$ positive semidefinite then $\sss_{n-1}(A_i)\geq 0,$ for all $i=1,\ldots,n.$ This gives that each $k_i$ has the same sign of $K,$ in particular, they have the same sign. The converse is also true. 
On the other hand, since $\tr(P_{r-1}A^2)=\sss_1\sss_r-(r+1)\sss_{r+1}$ (see Lemma 2.1, p.279 of \cite{BC}), if $r=n,$ then $\sss_{n+1}=0$ and
\[
\|\sqrt{P_{n-1}}A\|^2=\tr(P_{n-1}A^2)=HK\geq0.
\]
%Moreover, if $H(z_0)K(z_0)=n$ for some $z_0\in\Sigma^n,$ then, by \eqref{gauss-eigen}, $\sss_{n-1}(A_i)\neq 0$ at $z_0$ for every $i=1,\ldots,n.$ Thus, $P_{n-1}$ can not have null eigenvalues at $z_0.$ This gives that item (iv) in Theorem \ref{theo-main-1} can not happens. 
Therefore, since $\tr(P_{n-1})=\sigma_{n-1},$ we have the following
%This gives us the following

\begin{corollary}\label{cor-Gauss}
Let $\Sigma^n$ be a complete, $n$-dimensional, properly immersed, weakly convex, self-shrinker of the Gaussian curvature flow in $\R^{n+1}$. If $\sigma_{n-1}$ is bounded and
\[
HK\leq n,
\]
%then $\Sigma^n$ is the round sphere $\s^n(1)$ or a hyperplane in $\R^{n+1}.$  In particular, if $HK<n,$ then $\Sigma^n$ is a hyperplane.
then $\Sigma^n$ is one of the following:
\begin{itemize}
\item[(i)] the unitary round sphere $\s^n(1);$
\item[(ii)] a hyperplane in $\R^{n+1}.$
\end{itemize}
In particular, if $HK<n,$ then $\Sigma^n$ is a hyperplane in $\R^{n+1}$.
\end{corollary}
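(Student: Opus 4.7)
The plan is to deduce Corollary \ref{cor-Gauss} as a direct specialization of Theorem \ref{theo-main-1} to the case $r=n$, after checking that the hypotheses translate correctly and simplifying the conclusion.

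First, I would verify each hypothesis of Theorem \ref{theo-main-1} with $r=n$. The identity $\|\sqrt{P_{n-1}}A\|^2=\tr(P_{n-1}A^2)=\sss_1\sss_n-(n+1)\sss_{n+1}$ from Lemma 2.1 of \cite{BC} collapses to $HK$ since $\sss_{n+1}=0$, so the hypothesis $HK\leq n$ is exactly $\|\sqrt{P_{r-1}}A\|^2\leq r$. Weak convexity yields $k_i\geq 0$ for every $i$, and therefore $\sss_{n-1}(A_j)=\prod_{i\neq j}k_i\geq 0$, so $P_{n-1}$ is positive semidefinite. Finally, since $\tr(P_{n-1})=\sss_{n-1}$ and every eigenvalue of $P_{n-1}$ is non-negative, the boundedness of $\sss_{n-1}$ immediately yields that each eigenvalue of $P_{n-1}$ lies in $[0,\sss_{n-1}]$, hence $P_{n-1}$ is bounded.

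Second, I apply Theorem \ref{theo-main-1}. If $HK<n$, conclusion (i) directly gives that $\Sigma^n$ is a hyperplane. If $HK=n$, I first need to upgrade $P_{n-1}$ from positive semidefinite to positive definite to invoke conclusion (ii). But $HK=n>0$ forces $K>0$, and weak convexity then gives $k_i>0$ for every $i$; consequently each $\sss_{n-1}(A_j)=\prod_{i\neq j}k_i>0$, so $P_{n-1}$ is positive definite, and Theorem \ref{theo-main-1}(ii) applies.

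Finally, I specialize the list in conclusion (ii) to $r=n$. The cylinder range $r\leq m\leq n-1$ becomes $n\leq m\leq n-1$ and is empty, so only the round sphere survives; since $\delta_n(n)=\binom{n}{n}^{1/(n+1)}=1$, this sphere is $\s^n(1)$. The whole argument is a direct specialization plus hypothesis bookkeeping, and the only point requiring attention is matching the strict inequality $HK<n$ to case (i) while extracting positive-definiteness of $P_{n-1}$ in the equality case $HK=n$ from the positivity of $K$; no essential obstacle arises beyond that.
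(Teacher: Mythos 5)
Your hypothesis bookkeeping is fine -- with $r=n$, the identity $\|\sqrt{P_{n-1}}A\|^2=\sigma_1\sigma_n-(n+1)\sigma_{n+1}=HK$ is correct, weak convexity gives $P_{n-1}\geq 0$, and boundedness of $\sigma_{n-1}=\tr P_{n-1}$ together with non-negativity of the eigenvalues gives that $P_{n-1}$ is bounded. The final specialization of the list in Theorem~\ref{theo-main-1}(ii) (only $\mathbb{S}^n(1)$ survives, since $n\leq m\leq n-1$ is empty) is also correct. The problem is in the middle step. Your case split ``$HK<n$'' versus ``$HK=n$'' is not exhaustive: the hypothesis is $HK\leq n$ pointwise, so $HK$ may equal $n$ on part of $\Sigma^n$ and be strictly below $n$ elsewhere. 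In the second branch you argue that ``$HK=n>0$ forces $K>0$,'' but this is a pointwise implication valid only where $HK=n$; to invoke Theorem~\ref{theo-main-1}(ii) you would need $P_{n-1}$ to be positive definite \emph{everywhere} on $\Sigma^n$, and weak convexity together with $HK\leq n$ does not exclude points where $K=0$ and $P_{n-1}$ degenerates. So the corollary does not follow by direct specialization in the way you describe.

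This is exactly why the paper does not deduce the corollary from Theorem~\ref{theo-main-1} as a black box: it reruns the maximum-principle argument. With $P_{n-1}$ only positive semidefinite, Proposition~\ref{parab} yields that $\sigma_n^2$ either is constant or attains a maximum at some $x_0$. At such a maximum, \eqref{Lr-squared-0} forces $\sigma_n(x_0)=0$ (hence $\sigma_n\equiv 0$, a hyperplane) or $H(x_0)K(x_0)=n$. In the latter case $K(x_0)\neq 0$, so by \eqref{gauss-eigen} every $\sigma_{n-1}(A_i)(x_0)\neq 0$ and $P_{n-1}$ is positive definite in a \emph{neighborhood} of $x_0$; there the operator $\LL_{n-1}$ is elliptic, $\LL_{n-1}\sigma_n^2\geq 0$, and $\sigma_n^2$ has an interior maximum, so the classical Hopf strong maximum principle gives local constancy, which propagates by the usual open--closed argument because $\sigma_n^2\equiv\sigma_n^2(x_0)>0$ keeps $K\neq 0$. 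That local Hopf step, together with the propagation, is the ingredient your proposal is missing; without it the equality case $HK=n$ cannot be handled by Theorem~\ref{theo-main-1}(ii) alone.
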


%\begin{itemize}
%\item[(i)] a round sphere $\Sigma^n=\s^n(\sqrt[n]{n})$ in $\R^{n+1}$ if $\Sigma^n$ is compact;
%\item[(ii)] a cylinder $\Sigma^n=\s^{n-1}(1)\times\R\subset \R^{n+1},$ or 
%a hyperplane in $\R^{n+1}$ if $\Sigma^n$ is properly immersed.
%\end{itemize}

If we  remove the properness condition of the hypotheses of Theorem \ref{theo-main-1} we  obtain

\begin{theorem}\label{theo-main-2}
Let $\Sigma^n$ be a complete $n$-dimensional self-shrinker of the $r$-mean curvature flow in $\R^{n+1},$ for $1\leq r\leq n$. If the $(r-1)$-th Newton transformation $P_{r-1}$ is positive semidefinite,
\[
\sup \|A\|^2<\infty,\quad\mbox{and}\quad \sup\|\sqrt{P_{r-1}}A\|^2<r,
\]
then $\Sigma^n$ is a hyperplane in $\R^{n+1}$.
\end{theorem}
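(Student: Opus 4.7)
The plan is to remove the properness assumption of Theorem~\ref{theo-main-1} by substituting the parabolicity argument (used in the proper case) with an Omori--Yau type maximum principle for the drifted second-order operator associated to the $(r-1)$-Newton transformation,
\[
\mathcal{L}_r u \;=\; \tr\!\bigl(P_{r-1}\hesss u\bigr) \,-\, \langle X,\,P_{r-1}\nabla u\rangle;
\]
as the abstract announces, this principle holds on $\Sigma^n$ precisely under the standing hypotheses $P_{r-1}\geq 0$ and $\sup_{\Sigma}\|A\|^2<\infty$, with no properness required.

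The central computation is a Simons--type formula for
\[
f \;:=\; \|\sqrt{P_{r-1}}A\|^2 \;=\; \tr(P_{r-1}A^2)
\]
along a self-shrinker of \eqref{self-S}. Using the Codazzi equation, the recurrence \eqref{Pr-rec}, the fact that $P_{r-1}$ is divergence-free in the Euclidean ambient (Reilly's identity), and the self-shrinker relation $\sss_r = -\langle X,N\rangle$ to rewrite the first-order terms, I expect to arrive at an inequality of the shape
\[
\mathcal{L}_r f \;\geq\; 2\,f\,(r - f)\;+\;\mathcal{R},\qquad \mathcal{R}\geq 0,
\]
where $\mathcal{R}$ is a weighted gradient-of-$A$ remainder whose nonnegativity is precisely guaranteed by $P_{r-1}\geq 0$. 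The $r=1$ specialization is Cao--Li's formula $\mathcal{L}\|A\|^2 = 2\|A\|^2(1-\|A\|^2) + 2\|\nabla A\|^2$.

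Feeding this Bochner-type inequality into the Omori--Yau principle for $\mathcal{L}_r$ applied to $f$ --- whose supremum is finite because $P_{r-1}$ is a polynomial in the bounded operator $A$ --- produces a sequence $\{x_k\}\subset \Sigma^n$ with $f(x_k)\to f^{\ast}:=\sup_{\Sigma} f$ and $\liminf_k \mathcal{L}_r f(x_k)\leq 0$. Dropping $\mathcal{R}\geq 0$ and passing to the limit yields $0 \geq 2 f^{\ast}(r-f^{\ast})$; the strict assumption $\sup\|\sqrt{P_{r-1}}A\|^2 < r$ then forces $f^{\ast}=0$, hence $\|\sqrt{P_{r-1}}A\|^2\equiv 0$. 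An algebraic unwinding (identical to the $f\equiv 0$ branch in the proof of Theorem~\ref{theo-main-1}) --- namely, $P_{r-1}A\equiv 0$ by the commuting positive-semidefinite argument, then $P_r\equiv \sss_r I$ by \eqref{Pr-rec}, then $\sss_r\equiv 0$ by tracing, then $\langle X,N\rangle\equiv 0$ by \eqref{self-S}, which combined with the equality case $\mathcal{R}\equiv 0$ forces $A\equiv 0$ --- concludes that $\Sigma^n$ is a hyperplane.

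The main obstacle I foresee is the Simons-type step: for $r\geq 2$ there is no off-the-shelf identity, so one must carefully handle the commutators between $\hesss A$ and $P_{r-1}$ and check that the cubic-in-$A$ terms regroup exactly as $-2f^2$, yielding the key $2f(r-f)$ algebraic structure while the remaining gradient terms retain their sign under $P_{r-1}\geq 0$. A secondary subtlety is that Omori--Yau produces only a sequence rather than an attained maximum, so the strict inequality $f^{\ast}<r$ in the hypothesis is what rules out the borderline sphere/cylinder case of Theorem~\ref{theo-main-1}.
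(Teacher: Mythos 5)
Your proposal takes a genuinely different route from the paper, and that route has a real gap. The paper never computes a Simons-type formula for $f=\|\sqrt{P_{r-1}}A\|^2=\tr(P_{r-1}A^2)$. Instead, it applies the Omori--Yau lemma (Lemma~\ref{COR-max-princ-2}, itself proved from Lemma~\ref{max-princ-intrinsic} with the Hessian comparison theorem under $\sup\|A\|^2<\infty$) to the scalar function $\sss_r^2$, for which Proposition~\ref{main-id} provides the \emph{exact} identity
\begin{equation*}
\tfrac{1}{2}\LL_{r-1}\sss_r^2=\sss_r^2\bigl[r-\|\sqrt{P_{r-1}}A\|^2\bigr]+\lan P_{r-1}(\n\sss_r),\n\sss_r\ran.
\end{equation*}
This identity comes from a first-order integral formula of Alencar--Colares for $L_{r-1}\lan X,N\ran$ together with the self-shrinker relation $\sss_r=-\lan X,N\ran$; no differentiation of $A$ or $P_{r-1}$ beyond first order is needed, and the good sign of the last term is immediate from $P_{r-1}\geq 0$. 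The Omori--Yau sequence then gives $0\geq\sup\sss_r^2\,[\,r-\sup\|\sqrt{P_{r-1}}A\|^2\,]$, so the strict hypothesis forces $\sss_r\equiv 0$, hence $\lan X,N\ran\equiv 0$ and $\Sigma$ is a hyperplane.

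Your plan is the natural generalization of the Cheng--Peng computation to $r\geq 2$: apply the maximum principle to $f$ and use a Bochner--Simons inequality $\mathcal{L}f\geq 2f(r-f)+\mathcal{R}$, $\mathcal{R}\geq 0$. You correctly identify the derivation of that inequality as the "main obstacle," and that obstacle is precisely the gap. For $r\geq 2$, $P_{r-1}$ depends nonlinearly on $A$, $\nabla P_{r-1}$ does not obey a Codazzi-type symmetry, and the cubic and gradient cross-terms have no known reason to regroup with the sign you request; there is no off-the-shelf identity, and the paper's entire strategy is precisely to avoid needing one by switching the test function from $\tr(P_{r-1}A^2)$ to $\sss_r^2$. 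As written, your argument rests on an unproved key lemma. Two further points you would need to repair if you pursued your route: (a) the operator you write, $\tr(P_{r-1}\hesss u)-\lan X, P_{r-1}\n u\ran$, has a different drift term from the paper's $\LL_{r-1}u=\tr(P_{r-1}\hesss u)-\lan X,\n u\ran$ (they agree only when $r=1$), and both Lemma~\ref{COR-max-princ-2} and Proposition~\ref{main-id} are stated for the latter; (b) the boundedness of $\sss_r^2$ needed for the maximum principle is not free --- the paper deduces it from the trace Cauchy--Schwarz inequality $r^2\sss_r^2\leq\tr(P_{r-1})\tr(P_{r-1}A^2)$ together with the fact that $P_{r-1}$ is bounded because $A$ is --- whereas you only note the boundedness of $f$ itself.
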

For $r=1$ we extend the result of Cheng and Peng for hypersurfaces, see Theorem 1.1 of \cite{Cheng-Peng}:
\begin{corollary}[Cheng-Peng for hypersurfaces, \cite{Cheng-Peng}]\label{CP1}
If $\Sigma^n$ is a complete $n$-dimensional self-shrinker of the mean curvature flow in $\R^{n+1},$ then one of the following holds:
\begin{itemize}
\item[(i)] $\sup \|A\|^2\geq 1;$
\item[(ii)] or $\|A\|=0$ and $\Sigma^n$ is a hyperplane in $\R^{n+1}$.
\end{itemize}
In particular, if $\sup \|A\|^2 <1,$ then $\Sigma^n$ is a hyperplane in $\R^{n+1}$.
\end{corollary}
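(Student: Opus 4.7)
The plan is to adapt the proof of Theorem \ref{theo-main-1} by substituting the parabolicity argument (which leaned on properness) by the Omori-Yau type maximum principle for the second-order operator
\[
\LL_{r-1} u := \tr\bigl(P_{r-1}\hesss u\bigr) - \bigl\langle X,\, P_{r-1}\nabla u\bigr\rangle,
\]
whose validity on a (possibly non-proper) self-shrinker with $\sup\|A\|^2<\infty$ and $P_{r-1}\geq 0$ is the main analytic tool announced in the abstract. The hypotheses of Theorem \ref{theo-main-2} are tailored precisely to make this principle available.

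The first main step is to establish the pointwise identity
\[
\LL_{r-1}\sss_r \;=\; \sss_r\bigl(r-\|\sqrt{P_{r-1}}A\|^2\bigr)
\]
on a self-shrinker of the $r$-mean curvature flow. To derive it, I would differentiate the self-shrinker equation $\sss_r=-\lan X,N\ran$ to express $\nabla\sss_r$ in terms of $A$ and $X^\top$, and then apply a Simons-type formula for $\tr(P_{r-1}\hesss\sss_r)$. The recurrence $P_r=\sss_r I-P_{r-1}A$, together with the Codazzi equation (which makes $P_{r-1}$ divergence-free in the Euclidean ambient), and the identity $\tr(P_{r-1}A^2)=\|\sqrt{P_{r-1}}A\|^2$, should collapse the computation into the stated form. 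I expect this Simons-type calculation to be the main obstacle: the specialization $r=1$ reduces to the classical formula $\Delta H=H-H\|A\|^2$ relative to the drifted Laplacian used by Cheng and Peng, but the general-$r$ bookkeeping with the Newton transformations is delicate.

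With the identity in hand, the second step is to feed it into the Omori-Yau principle applied to $\sss_r$, which is bounded on $\Sigma^n$ because $\|A\|$ is bounded and $\sss_r$ is a polynomial in the principal curvatures. This yields a sequence $\{x_k\}\subset\Sigma^n$ with $\sss_r(x_k)\to\sup\sss_r$ and $\LL_{r-1}\sss_r(x_k)\leq 1/k$. Combined with the identity, we obtain
\[
\sss_r(x_k)\bigl(r-\|\sqrt{P_{r-1}}A\|^2(x_k)\bigr)\leq \frac{1}{k}.
\]
Since $\sup\|\sqrt{P_{r-1}}A\|^2<r$, the factor $r-\|\sqrt{P_{r-1}}A\|^2(x_k)$ stays bounded below by a positive constant, so letting $k\to\infty$ forces $\sup\sss_r\leq 0$. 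Applying the same principle to $-\sss_r$ gives $\inf\sss_r\geq 0$, and therefore $\sss_r\equiv 0$ on $\Sigma^n$.

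Finally, $\sss_r\equiv 0$ together with the self-shrinker equation gives $\lan X,N\ran\equiv 0$, so the position vector is everywhere tangent to $\Sigma^n$. Hence $\Sigma^n$ is invariant under Euclidean dilations centered at the origin, i.e., a smooth complete cone in $\R^{n+1}$; smoothness at the vertex then forces $\Sigma^n$ to be a hyperplane through the origin, completing the proof.
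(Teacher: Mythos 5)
Your overall strategy is the same as the paper's route to Theorem \ref{theo-main-2}, of which Corollary \ref{CP1} is the case $r=1$: use the drift identity for $\sss_r$, apply the Omori--Yau type principle (Lemma \ref{COR-max-princ-2}, whose hypotheses amount to $\sup\|A\|^2<\infty$ and $P_{r-1}\geq0$, both automatic here since $P_0=I$), conclude $\sss_r=-\lan X,N\ran\equiv0$, and read off the hyperplane.

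A few corrections to the execution. The displayed drift is wrong: the paper's operator is $\LL_{r-1}u=\tr(P_{r-1}\hess u)-\lan X,\nabla u\ran$, with $\nabla u$ and not $P_{r-1}\nabla u$ in the second slot. For $r=1$ the two coincide, so the Corollary is unaffected, but for $r\geq2$ your identity $\LL_{r-1}\sss_r=\sss_r\bigl(r-\|\sqrt{P_{r-1}}A\|^2\bigr)$ holds only for the paper's operator; and it is obtained by substituting the self-shrinker equation and $\tr(P_{r-1}A^2)=\sss_1\sss_r-(r+1)\sss_{r+1}$ into Lemma 2 of Alencar--Colares, not by a fresh Simons-type computation, so the ``main obstacle'' you anticipate is really a citation (Proposition \ref{main-id}). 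Second, the paper applies the maximum principle once, to $u=\sss_r^2$, using $\tfrac12\LL_{r-1}\sss_r^2=\sss_r^2\bigl(r-\|\sqrt{P_{r-1}}A\|^2\bigr)+\lan P_{r-1}\nabla\sss_r,\nabla\sss_r\ran\geq0$; your two-sided application to $\sss_r$ and $-\sss_r$ is equivalent, by linearity of $\LL_{r-1}$ and the uniform lower bound $r-\sup\|\sqrt{P_{r-1}}A\|^2>0$, but takes two passes. Finally, the cone step is fine in spirit; a quicker finish once $\lan X,N\ran\equiv0$ is to use that, on a complete cone, the shape operator scales as $\|A\|(tp)=t^{-1}\|A\|(p)$, so the standing hypothesis $\sup\|A\|^2<\infty$ already forces $A\equiv0$ and hence a hyperplane.
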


\begin{remark}
Notice that the hypothesis $\|\sqrt{P_{r-1}}A\|^2\leq r$ in Theorem \ref{theo-main-1} and Theorem \ref{theo-main-2} does not give any natural bounds on the second fundamental form for $r>1$, unlike Cao-Li's and Cheng-Peng's results. This drives us to impose new barriers to control the geometry and obtain the classification. 
\end{remark}

\begin{remark}
Cheng and Zhou \cite{CZ2017}, see Corollary 4 proved that complete self-shrinkers (in arbitrary codimension) of the mean curvature flow whose principal curvatures satisfy $\sup_{1\leq i\leq n} k_i^2\leq \delta <1,$ for some constant $\delta\geq0,$ are properly immersed, have finite weighted volume, and have polynomial volume growth. Since $\sup_{1\leq i\leq n} k_i^2\leq\|A\|^2,$ if we assume that $\sup\|A\|^2<1,$ then,  taking $\delta=\sup\|A\|^2$ and using the result of Cheng and Zhou, we conclude that the self-shrinker in the hypothesis of the result of Cheng and Peng is indeed properly immersed. We also point out that the equivalence between properness and polynomial volume growth in \cite{CZ2017} holds in a more general context, see \cite{CZ2021}.
\end{remark}

Taking $r=n,$ then we obtain the following result for self-shrinkers of the Gaussian curvature flow: 
\begin{corollary}\label{cor-main-2}
Let $\Sigma^n$ be a $n$-dimensional, complete, weakly convex, self-shrinker of the Gaussian curvature flow. If 
\[
\sup \|A\|^2<\infty \quad \mbox{and}\quad \sup HK<n,
\]
then $\Sigma^n$ is a hyperplane in $\R^{n+1}$.
\end{corollary}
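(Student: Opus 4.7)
The plan is to deduce the corollary from Theorem~\ref{theo-main-2} by specializing to $r=n$ and translating each hypothesis into the language of the Gaussian curvature flow. First, I would check that a self-shrinker of the Gaussian curvature flow is a self-shrinker of the $n$-mean curvature flow in the sense of \eqref{self-S}, which is immediate from the definitions since $\sss_n=K$. With the convention on $N$ fixed at the start of the paper (the normal points to the bounded side for closed hypersurfaces), weak convexity means $A\geq 0$, and then the discussion around \eqref{gauss-eigen} shows that this is equivalent to $P_{n-1}$ being positive semidefinite: the eigenvalues of $P_{n-1}$ are the symmetric functions $\sss_{n-1}(A_i)=k_1\cdots k_{i-1}k_{i+1}\cdots k_n$, and these are nonnegative precisely when all principal curvatures share a sign.

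Next, the critical computation is to recognize the quantity $\|\sqrt{P_{n-1}}A\|^2$. Using the trace identity $\tr(P_{r-1}A^2)=\sss_1\sss_r-(r+1)\sss_{r+1}$ (Lemma~2.1 of \cite{BC}) with $r=n$, and observing that $\sss_{n+1}=0$ by \eqref{eq1.1}, one obtains
\[
\|\sqrt{P_{n-1}}A\|^2 = \tr(P_{n-1}A^2) = \sss_1\sss_n = HK.
\]
Therefore the hypothesis $\sup HK<n$ is exactly the condition $\sup\|\sqrt{P_{r-1}}A\|^2<r$ of Theorem~\ref{theo-main-2} with $r=n$.

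Having verified positive semidefiniteness of $P_{n-1}$, the bound $\sup\|A\|^2<\infty$, and the sharp gap $\sup\|\sqrt{P_{n-1}}A\|^2<n$, I can invoke Theorem~\ref{theo-main-2} directly to conclude that $\Sigma^n$ is a hyperplane in $\R^{n+1}$, which is the desired conclusion. I do not expect any genuine obstacle here, as the work is purely an unpacking of definitions and a single algebraic identity; the only minor subtlety is being careful that the weakly convex case is compatible with the normal orientation convention set at the start of the paper, but this has already been discussed in the paragraphs leading to Corollary~\ref{cor-Gauss} and can simply be cited.
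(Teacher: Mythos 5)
Your proposal is correct and is exactly the argument the paper intends: the corollary is stated without a separate proof, immediately after the remark ``Taking $r=n$,'' so the expected reasoning is precisely the specialization of Theorem~\ref{theo-main-2} that you carry out. In particular, weak convexity ($A\geq 0$) gives $P_{n-1}\geq 0$ because its eigenvalues are the products $\sss_{n-1}(A_i)=\prod_{j\neq i}k_j\geq 0$, and the identity $\tr(P_{n-1}A^2)=\sss_1\sss_n-(n+1)\sss_{n+1}=HK$ (since $\sss_{n+1}=0$) converts $\sup HK<n$ into $\sup\|\sqrt{P_{n-1}}A\|^2<n$, so Theorem~\ref{theo-main-2} applies directly.
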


\begin{remark}
    Recently, Batista and Xavier proved in \cite{BX} results in the same direction of Theorems \ref{theo-main-1} and \ref{theo-main-2} assuming some  additional hypotheses, besides assuming weak convexity, i.e., the second fundamental form is positive semidefinite. They proved that, 
    \begin{itemize}
        \item[(i)] if $\Sigma^n$ is compact (without bondary), weakly convex and \[\tr(P_{r-1}A^2)\leq r,\quad 1\leq r\leq n,\] then $\Sigma^n$ is a sphere (Theorem A);
        \item[(ii)] if $\Sigma^n$ is complete, weakly convex, $\sss_1$ is bounded and \[\tr(P_{r-1}A^2)<r, \quad 1\leq r\leq n,\] then $\Sigma^n$ is a hyperplane in $\R^{n+1}$ (Theorem B).
    \end{itemize}
   Notice that Theorem A is an immediate corollary of  Theorem \ref{theo-main-1} item (i) and Theorem B is a corollary of Theorem \ref{theo-main-2}, since $A\geq0$ and $\sss_1$ bounded imply that all the principal curvatures are nonnegative and bounded, which gives that $P_{r-1}$ is positive semidefinite and bounded, but the converse is not necessarily true. 
\end{remark}

\begin{remark}\label{Pr-positive}
There are some conditions to deduce that $P_{r-1}$ is positive semidefinite on a connected hypersurface. In the following, we point out some of them:
\begin{itemize}
\item[(i)] if $\sss_{r}=0,$ then $P_{r-1}$ is semidefinite. If $r-1$ is odd, then we can choose an orientation such that $P_{r-1}$ is positive semidefinite and, if $r-1$ is even and $\sss_{r-1}\geq 0,$ then $P_{r-1}$ is positive semidefinite; 
\item[(ii)] if $\sss_{r}=0,$ and $\sss_{r+1}\neq 0,$ then $P_{r-1}$ is definite. If $r-1$ is odd, then we can choose an orientation such that $P_{r-1}$ is positive definite and, if $r-1$ is even and $\sss_{r-1}\geq 0,$ then $P_{r-1}$ is positive definite;
\item[(iii)] if $\sss_k>0$ for some $1\leq k\leq m-1$ and there exists a point where all the principal curvatures are nonnegative, then $P_r$ is positive definite for every $1\leq r\leq k-1.$
\end{itemize}
The proof of item (i) is a consequence of Lemma 1.1 and Equation (1.3) of \cite{HL1}, p.250-251, and a direct proof can be found in \cite{ASZ}, Proposition 2.4, p.188-189. In its turn, the proof of item (ii) can be found \cite{HL2}, Proposition 1.5, p.873, and the proof of item (iii) can be found in \cite{BC}, Proposition 3.2, p.280-281 (see also \cite{CR}, Proposition 3.2, p.188).
% (ii) The proofs of these facts can be found in \cite{C}, Proposition 2.8., p.192, (for items (i) and (ii)), \cite{CR}, Proposition 3.2, p.188, (for item (iii)).
\end{remark}

This paper is organized as follows: in Section \ref{sec2} we prove Theorem \ref{theo-main-1} using techniques of parabolicity for a certain second-order differential operator which generalizes the drifted Laplacian, while Section \ref{sec3} is devoted to the proof of Theorem \ref{theo-main-2} by using an Omori-Yau type maximum principle. for the same differential operator.
%semi-elliptic second-order differental operators.

%In {\color{red}Section \ref{sec4}} we prove some maximum principles, inspired in the work of Cheng and Yau (see \cite{CY} and \cite{Yau}), that will be useful in the proof of Theorem \ref{theo-main-2}. We conclude the paper proving Theorem \ref{theo-main-2} in Section \ref{sec5}.

%\section{Parabolicity}
\section{Proof of Theorem \ref{theo-main-1}}\label{sec2}

Let $X\colon \Sigma^n \rightarrow \mathbb{R}^{n+1}$ be a  hypersurface and $f\colon \Sigma^n\to\R$ be a smooth function. Define the second-order differential operator 
\begin{equation}\label{Lr}
L_{r} f = \tr(P_{r}\hess f), \quad 0\leq r\leq n-1,
\end{equation}
where $\hess f(v)=\nabla_{v}\nabla f$ is the hessian operator  and $\nabla f$ is the gradient of $f$ on $\Sigma^n.$ It can be proved that $L_{r} f = \di(P_{r}(\nabla f)),$  see Proposition B on page 470 of \cite{Reilly}. We also define drifted-$L_{r}$ operator by 
\begin{equation}\label{drifted-Lr}
\mathcal{L}_{r}f=L_{r} f-\langle X, \nabla f\rangle, \quad 0\leq r \leq n-1,
\end{equation}
where $X$ is the position vector field. 
\begin{definition}[Def. 4.2, \cite{AMR} p.243] The operator $\mathcal{L}_{r}$ is strongly parabolic on $\Sigma^n$ if for each nonconstant $u\in C^{2}(\Sigma^n)$ with $u^{\ast}=\sup_{\Sigma^n}u <+\infty$ and for each $\eta\in \mathbb{R}$ with $\eta < u^{\ast}$ we have \[\inf_{\Omega_{\eta}}\mathcal{L}_r(u)<0,\] where $\Omega_{\eta}=\{ x\in \Sigma^n\colon u(x)>\eta\}.$
\end{definition}

The Khasminskii Test (Theorem 4.12 of \cite{AMR}) gives sufficient conditions to guarantee strong parabolicity for the operator $\mathcal{L}_{r}$ on $\Sigma^n$ if $P_r$ is positive definite. However, in this article,  we mostly consider  positive semidefinite Newton transformations. In this case, following verbatim the  proof of the Kashminskii test in \cite{AMR} to $\mathcal{L}_{r}$ when $P_r$ is  positive semidefinite and we have the following statement.
\begin{proposition}\label{prop-3}
Assume the existence of a function $\gamma\in C^2(\Sigma^n)$ such that
\begin{equation}\label{khas}\left\{\begin{array}{llll}
\gamma(x)&\to& +\infty& \mbox{as}\,\, x\to\infty, \\
\mathcal{L}_r\gamma& < &0 &\mbox{off a compact set,}
\end{array}\right.
\end{equation}
where we are assuming that $P_r$ is positive semidefinite. If $u\in\CC^2(\Sigma^n)$ is not constant and satisfies $u^\ast=\sup_{\Sigma}u<\infty$, then $u$ achieves its maximum at a point $z_0\in\Sigma^n$ or 
\[
\inf_{B_\eta} \LL_r u <0,
\]
for every $0<\eta<u^\ast,$ where $B_\eta=\{x\in\Sigma^n; u(x)>u^\ast -\eta\}.$ In particular, if $\LL_r u\geq0$ and $u$ does not achieve its maximum, then $u$ is constant.  In addition, if $P_r$ is positive definite, then $\LL_r$ is strong parabolic $\Sigma^n.$
%Then $\Sigma^n$ is $\mathcal{L}_r$-parabolic.
\end{proposition}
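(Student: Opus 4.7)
The plan is a proof by contradiction paralleling the Khasminskii test \cite{AMR}, Theorem 4.12, with the key observation that positive semidefiniteness of $P_r$ (rather than strict positivity) is what is genuinely needed at the decisive step. Assume $u\in C^2(\Sigma^n)$ is nonconstant with $u^{\ast}=\sup_\Sigma u<\infty$, that $u$ does not attain its maximum, and, toward a contradiction, that $\mathcal{L}_r u\ge 0$ on $B_\eta=\{u>u^{\ast}-\eta\}$ for some $0<\eta<u^{\ast}$.

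First I would normalize the barrier. Since $\mathcal{L}_r$ annihilates constants, replacing $\gamma$ by $\gamma-\inf_\Sigma\gamma$ lets me assume $\gamma\ge 0$. Because $u^{\ast}$ is not attained and $K$ is compact, $\sup_K u<u^{\ast}$, so shrinking $\eta$ (which only makes the contradiction hypothesis stronger) guarantees $B_\eta\cap K=\emptyset$. Next, fix $x_0\in B_{\eta/2}$, set $\gamma_0=\gamma(x_0)$, and introduce the one-parameter family of test functions
\[
w_\epsilon=u-\epsilon(\gamma-\gamma_0),\qquad \epsilon>0.
\]
Using $u\le u^{\ast}$ and $\gamma(x)\to+\infty$, a short estimate shows that $w_\epsilon<w_\epsilon(x_0)$ outside the compact sublevel set $\{\gamma\le\gamma_0+\eta/(2\epsilon)\}$, so $w_\epsilon$ attains its maximum at some interior point $x_\epsilon$ of this sublevel set. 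The bound $u(x_\epsilon)\ge w_\epsilon(x_0)-\epsilon\gamma_0>u^{\ast}-\eta/2-\epsilon\gamma_0$ then traps $x_\epsilon$ in $B_\eta\setminus K$ for all sufficiently small $\epsilon$.

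At $x_\epsilon$ the drift term vanishes ($\nabla w_\epsilon=0$) and $\hess w_\epsilon\le 0$, so $\mathcal{L}_r w_\epsilon(x_\epsilon)=\tr(P_r\hess w_\epsilon)(x_\epsilon)$. Here positive semidefiniteness of $P_r$ is exactly what suffices: factoring $P_r=\sqrt{P_r}\cdot\sqrt{P_r}$ gives $\tr(P_r\hess w_\epsilon)=\tr(\sqrt{P_r}\,\hess w_\epsilon\,\sqrt{P_r})\le 0$, since symmetric conjugation preserves negative semidefiniteness. On the other hand $\mathcal{L}_r w_\epsilon=\mathcal{L}_r u-\epsilon\mathcal{L}_r\gamma$, and at $x_\epsilon\in B_\eta\setminus K$ the hypotheses give $\mathcal{L}_r u\ge 0$ and $-\mathcal{L}_r\gamma>0$, forcing $\mathcal{L}_r w_\epsilon(x_\epsilon)>0$, a contradiction. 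Taking the contrapositive proves the main assertion, and the specialization to $\mathcal{L}_r u\ge 0$ yields the stated constancy statement.

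For the final strong-parabolicity claim under $P_r$ positive definite, note $B_{\eta'}=\Omega_{u^{\ast}-\eta'}$, so the argument above already gives $\inf_{\Omega_\eta}\mathcal{L}_r u<0$ for every $\eta<u^{\ast}$ whenever $u$ does not attain its maximum. When $u$ does attain its maximum, strict ellipticity of $\mathcal{L}_r$ brings in the strong maximum principle: if $\mathcal{L}_r u\ge 0$ were to hold on $\Omega_\eta$, then on the connected component of $\Omega_\eta$ containing the maximum point $u$ would be identically $u^{\ast}$, which is incompatible with $u=\eta<u^{\ast}$ on $\partial\Omega_\eta$; hence $\inf_{\Omega_\eta}\mathcal{L}_r u<0$ in this case as well. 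The main obstacle throughout is the geometric step of trapping $x_\epsilon$ inside $B_\eta\setminus K$, requiring careful tuning of $x_0$, $\gamma_0$ and $\epsilon$; once that is arranged the rest is a short sign comparison based on the semidefinite trace inequality.
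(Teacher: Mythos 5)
Your proof is correct and follows essentially the same Khasminskii-barrier strategy as the paper: argue by contradiction, compare $u$ against a scaled translate of the barrier $\gamma$, trap a tangency point inside $B_\eta$ and away from the exceptional compact set, and use positive semidefiniteness of $P_r$ to force $\mathcal{L}_r$ to have the wrong sign there, with the Hopf/strong maximum principle handling the positive definite case. Your one-parameter family $w_\epsilon=u-\epsilon(\gamma-\gamma_0)$ together with the normalization $\gamma\ge 0$ is a cleaner packaging of the paper's nested-constant construction with $T_1<T_2<T_3$, $\alpha$, $\bar{\eta}$, $\delta$, but the underlying argument is the same.
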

\begin{remark}In the proof of the Khasminskii test, the necessity to  $P_r$ to be positive definite is to show that  (see Theorem 3.10 of \cite{AMR}) that $u$ can not achieve its maximum at a finite point $z_0$.  
\end{remark}
 \begin{proof}
Assume that $u^{\ast}$ can not be achieved in any point $z_0 \in \Sigma^n.$ Let us prove that, given $u\in\CC^2(\Sigma^n)$ with $u^\ast>0$ and $0<\eta<u^\ast$ fixed, but arbitrary, it holds
\[
\inf_{B_\eta} \LL_r u <0,
\]
where $B_\eta=\{x\in\Sigma^n; u(x)>u^\ast -\eta\}.$ Suppose by contradiction that $\LL_{r}u \geq 0$ on $B_{\eta}$.
Let 
\begin{equation}\label{eq2.5b}
\Omega_t=\{x\in \Sigma\colon \gamma (x) >t\}
\end{equation}
and
\begin{equation}\label{eq2.5b-2}
\Omega_t^c=\{x\in \Sigma\colon \gamma (x) \leq t\}
\end{equation}
be its complement. Notice that, since $\gamma(x)\to\infty$ when $x\to\infty,$ then $\Omega_t^c$ is compact. In particular, there exists $u_t^\ast=\max_{\Omega_t^c}u(x).$ Notice that $\{\Omega_t^c\}_{t\in\R}$ is an exhaustion of $\Sigma^n,$ since
\[
\bigcup_{t\in\R} \Omega_t^c = \Sigma^n \quad \mbox{and}\quad \Omega_{t_1}^c \subset \Omega_{t_2}^c \quad \mbox{for} \quad t_1< t_2.
\]
Moreover, it holds $u_{t_1}^\ast\leq u_{t_2}^\ast$ if $t_1<t_2.$ Since $u^\ast$ is not achieved, there exists a divergent sequence $t_j\to\infty$ such that $u_{t_j}^\ast\to u^\ast.$ Thus, we can 
choose $T_1>0$ sufficiently large such that
\begin{equation}\label{est-1}
u_{T_1}^\ast>u^\ast-\frac{\eta}{2}.
\end{equation}
Now, let $\alpha\in\R$ such that
\begin{equation}\label{est-2}
u_{T_1}^\ast<\alpha< u^\ast.
\end{equation}
Since $u_{t_j}^\ast\to u^\ast,$ we can find $T_2>T_1$ such that 
\begin{equation}\label{est-3}
u_{T_2}^\ast>\alpha.
\end{equation}
Select $\bar{\eta}>0$ small enough in order to have
\begin{equation}\label{est-4}
\alpha+\bar{\eta}<u_{T_2}^\ast.
\end{equation}
For every $\delta>0$ small, define
\begin{equation}\label{gamma-delta}
\gamma_\delta(x)=\alpha+\delta(\gamma(x)-T_1).
\end{equation}
Since $\Omega_{t_1}\supset\Omega_{t_2}$ for $t_1<t_2,$ the function $\gamma_\delta$ satisfies the following properties:
\begin{itemize}
\item[(i)] $\gamma_\delta(x)=\alpha$ for every $x\in\partial\Omega_{T_1};$

\item[(ii)] $\LL_r\gamma_\delta = \delta\LL_r\gamma<0$ on $\Omega_{T_1}$ for $T_1$ large enough (by hypothesis);

\item[(iii)] $\alpha<\gamma_\delta(x)\leq \alpha+\delta(T_2-T_1)$ on $\Omega_{T_1}\backslash\Omega_{T_2},$ since $T_1<\gamma(x)\leq T_2$ on $\Omega_{T_1}\backslash\Omega_{T_2}.$
\end{itemize}
Choosing $\delta>0$ small enough such that $\delta(T_2-T_1)<\bar{\eta}$ and by using (iii), we have
\begin{equation}\label{est-5}
\alpha<\gamma_\sigma(x)<\alpha+\bar{\eta}\quad \mbox{on} \quad \Omega_{T_1}\backslash\Omega_{T_2}.
\end{equation}
Since 
\[
\gamma_\delta(x)=\alpha>u_{T_1}^\ast\geq u(x) \quad \mbox{on} \quad \partial\Omega_{T_1},
\]
we have
\begin{equation}\label{est-6}
(u-\gamma_\delta)(x)\leq 0 \quad \mbox{on} \quad \partial\Omega_{T_1}.
\end{equation}
On the other hand, since 
\[
\Omega_{T_1}\backslash\Omega_{T_2}=\{x\in\Sigma^n; T_1<\gamma(x)\leq T_2\}\subset \Omega_{T_2}^c
\]
and using the divergence of the sequence by taking $T_1$ large enough, there exists $\bar{x}\in\Omega_{T_1}\backslash\Omega_{T_2}$ such that $u(\bar{x})=u_{T_2}^\ast.$ This implies
\begin{equation}\label{x-bar}
\aligned
(u-\gamma_\delta)(\bar{x})&=u_{T_2}^\ast - \alpha -\delta(\gamma(x)-T_1)\\
&> u_{T_2}^\ast - \alpha -\delta(T_2-T_1)\\
&> u_{T_2}^\ast -\alpha -\bar{\eta}>0,
\endaligned
\end{equation}
where we used the definition of $\gamma_\delta,$ the fact that $\bar{x}\in\Omega_{T_1}\backslash\Omega_{T_2}$, \eqref{est-5}, and \eqref{est-4}. Notice that, since $u^\ast<\infty$ and $\gamma(x)\to\infty$ when $x\to\infty,$ it holds
\begin{equation}\label{min}
(u-\gamma_\delta)(x)<0 \quad \mbox{on} \quad \Omega_{T_3}
\end{equation}
for $T_3>T_2$ sufficiently large. Thus, by \eqref{x-bar} and \eqref{min} we conclude that there exists a positive maximum of $u-\gamma_\delta$ achieved at a point $z_0\in \overline\Omega_{T_1}\backslash\Omega_3.$ In particular, since $P_r$ is positive semidefinite, it holds
\[
\LL_r(u-\gamma_\delta)(z_0)\leq 0.
\]
But notice that $z_0\in B_\eta.$ Indeed, $z_0\in\Omega_{T_1}$ and
\[
\aligned
u(z_0)&>\gamma_\delta(z_0)=\alpha+\delta(\gamma(z_0)-T_1)\\
&>\alpha>u_{T_1}^\ast>u^\ast-\frac{\eta}{2}>u^\ast-\eta.
\endaligned
\]
Therefore, since $z_0\in B_\eta,$ it holds, at $z_0,$
\[
0\leq \LL_r u \leq \LL_r\gamma_\delta = \delta\LL_r\gamma<0.
\]
This contradiction concludes the proof. In particular, if  $u\in\CC^2(\Sigma^n)$ such that $\LL_r u\geq 0$ with  $u^\ast<\infty$ then  then either $u(z_0)=u^{\ast}$ for some $z_0\in \Sigma^{n}$ or $u$ must be a constant function. Moreover, if $P_r$ positive definite, then $\LL_r$ is an elliptic operator. Thus, by the generalized Hopf maximum principle Theorem 3.10 of \cite{AMR}, any $\LL_r$-subharmonic function $u\in\CC^2(\Sigma^n)$, bounded above, can not achieves its maximum unless it is constant. Therefore,  $u$ does not achieve its maximum and the rest of the proof implies that $\LL_r$ is strongly parabolic.
\end{proof}
Our next result shows that, under fairly mild geometric assumptions, $\Sigma^n$ satisfies Khasminskii's conditions \eqref{khas} for the operator  $\LL_{r-1}$.

\begin{proposition}\label{parab}
 Let $X\colon \Sigma^{n}\rightarrow \mathbb{R}^{n+1}$ be a complete properly immersed self-shrinker of the $r$-mean curvature flow. If there exists $0<c<1,$ such that   
 \begin{equation}\label{eq3.8}
 (n-r+1)\limsup_{x\to \infty}\frac{\sss_{r-1}(x)}{\Vert X(x)\Vert^2} \leq c,
 \end{equation} 
then the function $\gamma(x)=\|X(x)\|^2$ satisfies the Khasminskii's conditions \eqref{khas} of Proposition \ref{prop-3} for the operator $\LL_{r-1}$. In particular, if $u\in\CC^2(\Sigma^n)$ is bounded above and sastisfies $\LL_{r-1}u\geq 0,$ then $u$ achieves its maximum or $u$ is constant. Moreover, if $P_{r-1}$ is positive definite, then $\LL_{r-1}$ is strong parabolic $\Sigma^n.$

\end{proposition}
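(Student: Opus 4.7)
The plan is to verify directly the two Khasminskii conditions \eqref{khas} for $\gamma(x)=\|X(x)\|^{2}$, and then invoke Proposition \ref{prop-3}. The first condition $\gamma(x)\to+\infty$ as $x\to\infty$ is immediate from the fact that $X$ is a proper immersion: the preimage of any Euclidean ball is compact in $\Sigma^{n}$, so $\|X(x)\|\to\infty$ as $x$ leaves every compact set of $\Sigma^{n}$. The bulk of the argument is to compute $\LL_{r-1}\gamma$ explicitly and combine it with the self-shrinker equation \eqref{self-S} and the growth hypothesis \eqref{eq3.8}.

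Step 1 (gradient and Hessian of $\gamma$). Starting from $\gamma=\langle X,X\rangle$ and the fact that $\overline{\nabla}_{Y}X=Y$ in $\R^{n+1}$, I would split $X=X^{T}+\langle X,N\rangle N$ and compute, for any tangent field $Y$,
\[
\nabla \gamma = 2X^{T}, \qquad \hesss \gamma (Y,Y)=2\|Y\|^{2}+2\langle X,N\rangle\,\langle AY,Y\rangle.
\]

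Step 2 (trace against $P_{r-1}$). Plugging in the Hessian and using the well-known identities $\tr P_{r-1}=(n-r+1)\sss_{r-1}$ and $\tr(P_{r-1}A)=r\,\sss_{r}$ (Lemma 2.1, p.279 of \cite{BC}), I obtain
\[
L_{r-1}\gamma = \tr(P_{r-1}\hesss\gamma)=2(n-r+1)\sss_{r-1}+2r\,\sss_{r}\langle X,N\rangle.
\]
Using the self-shrinker equation $\sss_{r}=-\langle X,N\rangle$ in the second term and the identity $\langle X,\nabla\gamma\rangle=2\|X^{T}\|^{2}=2\|X\|^{2}-2\sss_{r}^{2}$ for the drift term, I conclude
\[
\LL_{r-1}\gamma = 2(n-r+1)\sss_{r-1}-2(r-1)\sss_{r}^{2}-2\|X\|^{2}.
\]

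Step 3 (discarding the $\sss_{r}^{2}$ term and applying the hypothesis). Since $r\ge 1$, the term $-2(r-1)\sss_{r}^{2}\leq 0$, which only helps. Thus
\[
\LL_{r-1}\gamma \;\leq\; 2\|X\|^{2}\!\left(\frac{(n-r+1)\sss_{r-1}}{\|X\|^{2}}-1\right).
\]
By \eqref{eq3.8} we can fix $c'\in(c,1)$; then for $x$ outside a sufficiently large compact set the quotient inside the parentheses is bounded above by $c'-1<0$, so $\LL_{r-1}\gamma<0$ off a compact set. This establishes the second Khasminskii condition.

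Step 4 (conclusion). With both conditions in \eqref{khas} verified for $\gamma$, Proposition \ref{prop-3} applies: any $u\in\CC^{2}(\Sigma^{n})$ bounded above with $\LL_{r-1}u\geq 0$ either attains its supremum or is constant, and if $P_{r-1}$ is positive definite then $\LL_{r-1}$ is strongly parabolic on $\Sigma^{n}$. The only subtle step is Step 2, where one must correctly use the self-shrinker condition to cancel the $\sss_{r}\langle X,N\rangle$ term and rewrite everything in terms of $\sss_{r-1}$, $\sss_{r}^{2}$, and $\|X\|^{2}$; no genuine obstacle arises beyond this routine but careful computation.
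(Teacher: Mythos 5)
Your proof is correct and follows essentially the same route as the paper's: both reduce to the identity $\tfrac12\LL_{r-1}\|X\|^2=(n-r+1)\sss_{r-1}-(r-1)\sss_r^2-\|X\|^2$, discard the nonpositive $-(r-1)\sss_r^2$ term, and invoke hypothesis \eqref{eq3.8} to get negativity off a compact set. The only (cosmetic) difference is that you derive the Hessian of $\|X\|^2$ from scratch via the decomposition $X=X^{T}+\langle X,N\rangle N$, whereas the paper cites the formula for $L_{r-1}\|X\|^2$ directly from Lemma~1 of Alencar--Colares.
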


\begin{proof} 
%If $\Sigma^n$ is compact then the operator  $\LL_{r-1}$ is parabolic and thus strong parabolic. Indeed, suppose that there exists a positive function  $u\in C^{2}(\Sigma^n)$ with $u^{\ast}<\infty$,  satisfying $\LL_{r-1}u\geq 0$. Then $v=u^{\ast}-u$ satisfies the conditions of Theorem 3.10, p.175 of \cite{AMR} and the the strong maximum principle for $\LL_{r-1}$ holds on $\Sigma^n$. Since $\Sigma^n$ is compact there exists a point $x_0\in \Sigma^{n}$ so that $u(x_0)=u^{\ast}$. By the strong maximum principle, $v\equiv 0$ is constant and $u$ is also constant. Then  $\LL_{r-1}$ is parabolic. We then assume that $\Sigma^n$ is not compact. In this case, by Proposition \ref{prop-3}, it suffices to find a function $\gamma\in C^2(\Sigma^n)$ such that $\gamma(x)\to+\infty$ as  $x\to\infty$ in $\Sigma^n$ and $\LL_{r-1}\gamma\leq0$ outside a compact set. Since $\Sigma^n$ is properly immersed, the function $\gamma=\Vert X\Vert^2$ satisfies this condition. 
Since the immersion is proper, the function $\gamma(x)=\|X(x)\|^2\to\infty$ when $x\to\infty.$ On the other hand,  using Lemma 1, p.208, of \cite{Alencar-Colares}, we have that
\[
\begin{aligned}
\frac{1}{2}L_{r-1}\|X\|^2 &= (n-r+1)\sss_{r-1}+r\sss_r\lan X,N\ran\\
&=(n-r+1)\sss_{r-1}-r\lan X,N\ran^2.\\
\end{aligned}
\]
This gives
\[
\begin{aligned}
\frac{1}{2}\LL_{r-1}\|X\|^2&=(n-r+1)\sss_{r-1}-r\lan X,N\ran^2 - \lan\n\|X\|^2,X\ran\\
&=(n-r+1)\sss_{r-1}-r\lan X,N\ran^2 - \|X^\top\|^2\\
&=(n-r+1)\sss_{r-1}-(r-1)\lan X,N\ran^2 - \|X\|^2\\
&\leq (n-r+1)\sss_{r-1} - \|X\|^2\\
&=\left[(n-r+1)\frac{\sss_{r-1}}{\|X\|^2}-1\right]\|X\|^2\\
&\leq (c-1)\Vert X\Vert^2 <0,
\end{aligned}
\]
outside a suitable compact set. %Thus $\Sigma^n$ is $\LL_{r-1}$-parabolic.
\end{proof}

%Before introducing the next results, we recall the following properties of the operator $P_r,$ whose proof can be found in \cite{Reilly} and \cite{BC}, Lemma 2.1, p.279:
%\begin{lemma}\label{trace-Pr}
%For $1\leq r\leq n,$ we have
%\begin{itemize}
%\item[(i)] $\tr(P_{r-1}) = (n-r+1)\sss_{r-1};$
%\item[(ii)] $\tr(AP_{r-1}) = r\sss_r;$
%\item[(iii)] $\tr(P_{r-1}A^2) = \sss_1\sss_r - (r+1)\sss_{r+1}.$
%\end{itemize}
%\end{lemma}
In the following lemma, we show that, for self-shrinkers of the $r$-mean curvature flow, $\sss_r$ satisfies a second-order partial differential equation, that is (semi-)elliptic if $P_{r-1}$ is positive (semi)definite:

%compute  $\LL_{r-1}\sss_r$ on self-shrinkers of the $r$-mean curvature flow.

\begin{proposition}\label{main-id}
Let $X\colon\Sigma^n\rightarrow\R^{n+1}$ be  a self-shrinker of the $r$-mean curvature flow, i.e.,  a hypersurface such that $\sss_r=-\lan X,N\ran.$ Then
\begin{equation}\label{eq-main-id}
\LL_{r-1}\sss_r+\left[\|\sqrt{P_{r-1}}A\|^2-r\right]\sss_r=0.
\end{equation}
Here, $N$ is the unit normal vector field of the immersion $X.$ Moreover, if $P_{r-1}$ is positive semidefinite and $\|\sqrt{P_{r-1}}A\|^2\leq r$, then 
\begin{equation}\label{Lr-squared-0}
\begin{aligned}
\frac{1}{2}\LL_{r-1}\sss_r^2&=\sss_r^2\left[r-\|\sqrt{P_{r-1}}A\|^2\right]+ \lan P_{r-1}(\n \sss_r),\n \sss_r\ran\geq 0.
\end{aligned}
\end{equation}
\end{proposition}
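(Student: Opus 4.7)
The plan is to compute $\LL_{r-1}\sss_r$ by first evaluating $L_{r-1}\lan X,N\ran$ on a general hypersurface and then invoking the self-shrinker identity $\sss_r=-\lan X,N\ran$. Once \eqref{eq-main-id} is in hand, \eqref{Lr-squared-0} will follow from a divergence-form product rule for $\LL_{r-1}$.

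The main step is to derive a Hessian formula for the support function $\lan X,N\ran$. From $\overline{\nabla}_v N=-A(v)$ one gets immediately $\n\lan X,N\ran=-A(X^{\top})$. Differentiating once more via the Gauss formula $\n_v X^{\top}=v+\lan X,N\ran A(v)$ (which comes from $\overline{\nabla}_v X=v$ together with $X=X^{\top}+\lan X,N\ran N$) and the Codazzi equation $(\n_v A)(w)=(\n_w A)(v)$ yields
\[
\hesss\lan X,N\ran(v)=-(\n_{X^{\top}}A)(v)-A(v)-\lan X,N\ran A^{2}(v).
\]
Tracing against $P_{r-1}$ and invoking three standard identities for Newton transformations -- namely $\tr(P_{r-1}A)=r\sss_r$, $\tr(P_{r-1}A^{2})=\|\sqrt{P_{r-1}}A\|^{2}$, and the Reilly-type formula $\tr(P_{r-1}\n_Y A)=Y(\sss_r)$ (which follows by diagonalizing $A$ and using that the eigenvalues $\sss_{r-1}(A_j)$ of $P_{r-1}$ are exactly $\partial\sss_r/\partial k_j$) -- I obtain
\[
L_{r-1}\lan X,N\ran=-\lan X,\n\sss_r\ran-r\sss_r-\lan X,N\ran\,\|\sqrt{P_{r-1}}A\|^{2}.
\]

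Now I would substitute $\lan X,N\ran=-\sss_r$, which forces $L_{r-1}\lan X,N\ran=-L_{r-1}\sss_r$ and $\n\lan X,N\ran=-\n\sss_r$; rearranging gives $L_{r-1}\sss_r-\lan X,\n\sss_r\ran=(r-\|\sqrt{P_{r-1}}A\|^{2})\sss_r$, which is exactly \eqref{eq-main-id} by the definition of $\LL_{r-1}$. For \eqref{Lr-squared-0}, I rely on the divergence form $L_{r-1}f=\di(P_{r-1}\n f)$ together with the symmetry of $P_{r-1}$ to derive the Leibniz identity $\tfrac{1}{2}\LL_{r-1}(f^{2})=f\,\LL_{r-1}f+\lan P_{r-1}\n f,\n f\ran$. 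Setting $f=\sss_r$ and substituting \eqref{eq-main-id} gives the stated formula, and nonnegativity is immediate from $P_{r-1}\geq 0$ and $\|\sqrt{P_{r-1}}A\|^{2}\leq r$.

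The main obstacle is bookkeeping in the Hessian computation: one must organize the Gauss and Codazzi manipulations so that, after tracing against $P_{r-1}$, the cross term $\tr(P_{r-1}(\n_\cdot A)(X^{\top}))$ collapses into $\lan X,\n\sss_r\ran$ via Reilly's identity. Sign conventions for $A$ and the symmetry of $P_{r-1}$ demand care, but once the trace collapses correctly, the remaining steps are purely algebraic.
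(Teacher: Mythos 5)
Your proof is correct and follows essentially the same route as the paper: the only difference is that the paper quotes the trace identity for $L_{r-1}\langle X,N\rangle$ from Lemma 2 of Alencar--Colares, whereas you derive it directly by computing $\hesss\langle X,N\rangle$ via the Gauss and Codazzi equations and then contracting against $P_{r-1}$ using the Reilly-type identity $\tr\bigl(P_{r-1}(\nabla_Y A)\bigr)=Y(\sigma_r)$ and $\tr(P_{r-1}A)=r\sigma_r$, $\tr(P_{r-1}A^2)=\Vert\sqrt{P_{r-1}}A\Vert^2$. The remaining steps (substituting $\sigma_r=-\langle X,N\rangle$, then applying the $\LL_{r-1}$ Leibniz rule to $\sigma_r^2$) coincide with the paper's argument.
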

\begin{proof}
By Lemma 2, p. 209, of \cite{Alencar-Colares}, we have, for $1\leq r\leq n-1,$ 
\begin{equation}\label{main-id-1}
L_{r-1}\lan X,N\ran = -r\sss_r - (\sss_1\sss_r - (r+1)\sss_{r+1})\lan X,N\ran - \lan\n \sss_r,X\ran.
\end{equation}
Since $\Sigma^n$ satisfies $\sss_r=-\lan X,N\ran$ and by Lemma 2.1, p.279, of \cite{BC},
\[
\sss_1\sss_r - (r+1)\sss_{r+1}=\tr(P_{r-1}A^2)=\|\sqrt{P_{r-1}}A\|^2,
\] 
we obtain
\[
L_{r-1} \sss_r = r\sss_r - \|\sqrt{P_{r-1}}A\|^2\sss_r +\lan\n \sss_r,X\ran,
\]
i.e.,
\begin{equation}\label{main-id-2}
\LL_{r-1}\sss_r = -\left[\|\sqrt{P_{r-1}}A\|^2-r\right]\sss_r.
\end{equation}
On the other hand, since $L_{r-1}$ satisfies
\begin{equation}\label{Lr-fg}
L_{r-1}(fg)=fL_{r-1}g+gL_{r-1}f + 2\lan P_{r-1}(\n f),\n g\ran,
\end{equation}
it holds
\begin{equation}\label{LLr-fg}
\LL_{r-1}(fg)=f\LL_{r-1}g+g\LL_{r-1}f + 2\lan P_{r-1}(\n f),\n g\ran.
\end{equation}
Thus, by \eqref{eq-main-id} and \eqref{LLr-fg} we have 
\begin{equation}\label{Lr-squared}
\begin{aligned}
\frac{1}{2}\LL_{r-1}\sss_r^2&=\sss_r\LL_{r-1}\sss_r+\lan P_{r-1}(\n \sss_r),\n \sss_r\ran\\
&=\sss_r^2\left[r-\|\sqrt{P_{r-1}}A\|^2\right]+ \lan P_{r-1}(\n \sss_r),\n \sss_r\ran\geq 0.
\end{aligned}
\end{equation}
\end{proof}
\begin{lemma}
    Let $X\colon \Sigma^n \rightarrow \mathbb{R}^{n+1}$ be a  hypersurface and $f\colon \Sigma^n\to\R$ be a $C^{2}(\Sigma^n)$-function.  Suppose that $P_r$ is positive semidefinite and  $x_0$ is a point of maximum of $f$. Then  
\begin{equation}\label{Lr-b}
L_{r} f(x_0) = \tr(P_{r}\hess f(x_0)\leq 0.  
\end{equation}
\end{lemma}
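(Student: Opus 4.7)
The plan is to exploit the two ingredients independently: the first-order and second-order necessary conditions at a maximum, and the positive semidefiniteness of $P_r$. At $x_0$ the Hessian operator $\hess f(x_0)\colon T_{x_0}\Sigma^n\to T_{x_0}\Sigma^n$ is symmetric and negative semidefinite, that is, $\lan \hess f(x_0)v,v\ran\leq 0$ for every $v\in T_{x_0}\Sigma^n$. Combined with the hypothesis $P_r(x_0)\geq 0$, the conclusion $\tr(P_r\hess f)(x_0)\leq 0$ will follow from the elementary fact that the trace of the product of a symmetric positive semidefinite operator and a symmetric negative semidefinite operator is non-positive.

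Concretely, I would first invoke the spectral theorem for $P_r(x_0)$: since $P_r(x_0)$ is symmetric and positive semidefinite, there exists an orthonormal basis $\{e_1,\ldots,e_n\}$ of $T_{x_0}\Sigma^n$ diagonalizing it, with eigenvalues $\lambda_1,\ldots,\lambda_n\geq 0$. Expanding the trace in this basis and using the symmetry of $P_r$ to move it past the inner product gives
\begin{equation*}
L_r f(x_0)=\sum_{i=1}^n\lan P_r(x_0)\hess f(x_0)e_i,e_i\ran=\sum_{i=1}^n\lan\hess f(x_0)e_i,P_r(x_0)e_i\ran=\sum_{i=1}^n\lambda_i\,\lan\hess f(x_0)e_i,e_i\ran.
\end{equation*}
Each $\lambda_i\geq 0$ and each $\lan\hess f(x_0)e_i,e_i\ran\leq 0$, so every summand is non-positive and hence the whole sum is $\leq 0$.

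An equivalent route, which avoids diagonalization, is to use the existence of the square root $\sqrt{P_r(x_0)}$ (available because $P_r(x_0)\geq 0$) and to write $\tr(P_r\hess f)(x_0)=\tr\bigl(\sqrt{P_r}\,\hess f\,\sqrt{P_r}\bigr)(x_0)$ by the cyclic property of the trace. Since $\lan\sqrt{P_r}\,\hess f\,\sqrt{P_r}\,v,v\ran=\lan \hess f(\sqrt{P_r}\,v),\sqrt{P_r}\,v\ran\leq 0$ for every $v$, the operator under the trace is negative semidefinite, whence its trace is $\leq 0$. There is no serious obstacle here; the only point deserving care is the symmetry bookkeeping needed to shift $P_r$ past the inner product (or, equivalently, to apply cyclicity of the trace), which relies crucially on the symmetry of both $P_r$ and $\hess f$.
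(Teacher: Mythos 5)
Your primary argument — diagonalize $P_r(x_0)$ by the spectral theorem, move $P_r$ past the inner product using its symmetry, and conclude termwise from $\lambda_i\geq 0$ and $\hesss f(x_0)(e_i,e_i)\leq 0$ — is exactly the paper's proof. The alternative via $\sqrt{P_r}$ and cyclicity of the trace is a fine variant but not what the paper does.
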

\begin{proof}Let $\{e_1, \ldots, e_{n}\}$ be an orthonormal basis of $T_{x_0}\Sigma^n$ formed with eigenvalues of $P_r(x_0)$ with eigenvalues $0\leq \lambda_1\leq \lambda_2 \leq \cdots \leq \lambda_n$.
Then
\begin{eqnarray}
\tr(P_{r}\hess f)(x_0)&=&\sum_{i=1}^{n}\langle P_r \hess f(e_i), e_i\rangle \nonumber \\
&=&\sum_{i=1}^{n}\langle  \hess f(e_i), P_r e_i\rangle \nonumber \\
&=& \sum_{i=1}^{n} \lambda_i \hesss(e_i, e_i)(x_0)\nonumber\\
&\leq & 0\nonumber
\end{eqnarray} Since at a point of maximum $\hesss(e_i, e_i)(x_0)\leq 0$.
\end{proof}
\begin{proof}[Proof of Theorem \ref{theo-main-1}]
Using the Cauchy-Schwarz inequality for matrices, 
\[
[\tr(BC^t)]^2\leq \tr(BB^t)\tr(CC^t)
\]
for $B$ and $C$ matrices, where $(\ \ )^t$ denotes the transpose of a matrix, we have
\[
\aligned
\left[\tr(P_{r-1}A)\right]^2&=[\tr(\sqrt{P_{r-1}}(\sqrt{P_{r-1}}A))]^2\\
&\leq \tr(P_{r-1})\tr(P_{r-1}A^2),
\endaligned
\]
since $A,\ \sqrt{P_{r-1}},$ and $\sqrt{P_{r-1}}A$ are symmetric matrices that commute with each other. 
By hypothesis, $\|\sqrt{P_{r-1}}A\|^2=\tr(P_{r-1}A^2)\leq r$, $P_{r-1}$ is bounded and  by Lemma 2.1, p.279, of \cite{BC}, $\tr(P_{r-1}A)=r\sss_r,$ we have
\begin{equation}\label{bound-sr}
r^2\sss_r^2\leq \tr(P_{r-1})\tr(P_{r-1}A^2)\leq r\tr(P_{r-1})<\infty,
\end{equation}
i.e., $\sss_r^2$ is a bounded function.  Moreover, by Equation \eqref{Lr-squared-0}, $\LL_{r-1}\sigma^2_r\geq 0.$ 

Since $P_{r-1}$ is bounded and $\tr(P_{r-1})=(n-r+1)\sss_{r-1},$ it holds that $\sss_{r-1}$ is bounded. This gives that
\[
\limsup_{x\to\infty}\frac{\sss_{r-1}(x)}{\|X(x)\|^2}=0,
\]
provided $\Sigma^n$ is assumed to be properly immersed. Therefore, by Proposition \ref{parab}, p.\pageref{parab}, $\sss_r^2$ achieves a maximum point $x_0\in\Sigma^n$ or $\sss_r^2$ is constant. If $\Sigma^n$ is compact $\sss_r^2$ has a maximum  point.
If $\sss_r^2$ achieves its maximum at $x_0,$ then $\n\sss_r^2(x_0)=0$ and, by \eqref{Lr-squared-0} and \eqref{Lr-b},
\begin{equation}\label{max-point}
0\geq\frac{1}{2}\LL_{r-1}(\sss_r^2)(x_0)=\sss_r^2(x_0)\left[r-\|\sqrt{P_{r-1}}A\|(x_0)^2\right] \geq 0.
\end{equation}
Therefore, $\sss_r^2(x_0)=0$ or $\|\sqrt{P_{r-1}}A\|^2(x_0)=r.$ 

If $\|P_{r-1}A\|^2<r$ then $\sss_r^2\equiv 0$  since $\sss_r^2\geq 0$. Thus, $\lan X,N\ran=0$ and $\Sigma^n$ is a hyperplane.
On the other hand, if $P_{r-1}$ is positive definite, then, by Proposition \ref{parab}, $\LL_{r-1}$ is strongly parabolic. Therefore, since $\sigma_r^2$ is bounded and $\LL_{r-1}\sigma_r^2\geq 0,$ we can conclude that $\sigma_r^2$ is constant.
%In the first case, since $\sss_r^2\geq 0$ this gives that $\sss_r^2\equiv 0$ and thus, $\lan X,N\ran\equiv0$ and $\Sigma^n$ is a hyperplane passing through the origin.
%In the second case, i.e., \[\|\sqrt{P_{r-1}}A\|^2(x_0)=r,\] we have that $P_{r-1}$ has a null eigenvalue at $x_0$. Otherwise, $P_{r-1}(x_0)$ would be positive definite and \eqref{Lr-squared} would give $\LL_{r-1}\sss_r^2>0$ in a neighborhood of $x_0.$ Thus, by the classical Hopf maximum principle, it would imply that $\sss_r$ is constant. Notice also that, since $\n\lan X,N\ran=-A(X^\top),$ we must have $A(X^\top(x_0))=0.$ This is the case (iv) in Theorem \ref{theo-main-0}. In particular, if $\|\sqrt{P_{r-1}}A\|^2<r,$ then $-\lan X,N\ran=\sss_r^2\equiv 0$ and $\Sigma^n$ is a hyperplane. On the other hand, if $P_{r-1}$ is positive definite, then the case (iv) does not happen and thus, we have that $\sss_r^2$ is constant. Notice that this is also a consequence of the parabolicity of the operator $\LL_{r-1}$ in this situation.
%Therefore, except by the case of item (iv), we have that $\sss_r=-\lan X,N\ran$ is constant. 
By Theorem 1 of \cite{DT}, the  hypersurfaces of $\R^{n+1}$ with constant support function $\lan X,N\ran$ are $\Sigma^n=\s^{m}(R)\times\R^{n-m},$ where $0\leq m\leq n,$ for an appropriate radius $R.$ Here we are considering that $\Sigma^n=\R^n$ is a hyperplane, for $m=0$, and $\Sigma^n=\s^n(R)$ is the round sphere, for $m=n$. Since the principal curvatures of $\s^{m}(R)\times\R^{n-m}$ are $k_1=1/R,$ with multiplicity $m,$ and $k_2=0,$ with multiplicity $n-m,$ we have that 
\begin{equation}\label{Sr-Raio}
\sigma_r=\binom{m}{r}\frac{1}{R^r}\raisepunct{,}
\end{equation}
where we are adopting the convention that $\binom{m}{r}=0$ if $r>m.$ Since, for $1\leq m\leq n,$ it holds $\lan X,N\ran=-R$ in these surfaces, from the self-shrinker equation $\sss_r=-\lan X,N\ran$ and using \eqref{Sr-Raio}, we obtain that
\begin{equation}\label{radius}
R=\binom{m}{r}^{\frac{1}{r+1}}.
\end{equation}
%On the other hand, if $m=0$ we obtain the hyperplanes passing through the origin. 
The Example \ref{example}, p.\pageref{example}, shows us the sphere (for $m=n$) and cylinders (for $1\leq m\leq n-1$) with radius given in \eqref{radius} satisfy $\|\sqrt{P_{r-1}}A\|^2=r.$

%Thus, by \ref{Lr-squared}, it holds
%\[
%\sss_r^2\left[r-\|\sqrt{P_{r-1}}A\|^2\right]=0. 
%\] 
%Therefore $\sss_r\equiv 0$ and $\Sigma^n=\mathbb{R}^{n}$ or $r=\|\sqrt{P_{r-1}}A\|^2$ and, in this case, we have $\Sigma^n$ is a sphere or a cylinder. Since

%The Example \ref{ex-cyl}, p.\pageref{ex-cyl}, shows that the radius $$R=\binom{m}{r}^{\frac{1}{r+1}}.$$ This concludes the proof.
\end{proof}
\begin{remark}\label{remark-cao-li}
%Notice that the situation of item (iv) of Theorem \ref{theo-main-0} can not happen for $r=1$, since $P_{r-1}=I$ is always positive definite. Thus, Corollary \ref{cao-li} is a direct consequence of Theorem \ref{theo-main-0}.
If $r=1,$ we have $P_{r-1}=I$ is naturally positive definite and $\LL_{r-1}=\LL:=\Delta - \lan X,\n\cdot\ran,$ the so called drifted Laplacian, is parabolic. Thus, under the hypothesis, we can conclude that $\sigma_1^2$ is constant.
This gives an alternative proof of Cao-Li result for hypersurfaces, see Corollary \ref{cao-li}, p.\pageref{cao-li}. 
\end{remark}

\begin{proof}[Proof of Corollary \ref{cor-Gauss}]
%If we have that $\|\sqrt{P_{n-1}}A\|^2(z_0)=H(z_0)K(z_0)=n$ for some $z_0\in\Sigma^n,$ then, by \eqref{gauss-eigen}, $\sss_{n-1}(A_i)\neq 0$ at $z_0$ for every $i=1,\ldots,n.$ Thus, $P_{n-1}$ can not have null eigenvalues at $z_0.$ This gives that item (iv) in Theorem \ref{theo-main-0} can not happen.
In the case $r=n,$ if $\sigma_n^2$ achieves a maximum at $x_0\in\Sigma^n,$ then, by \eqref{Lr-squared}, $\sss_n(x_0)^2=0$ or $\|P_{n-1}A\|^2(x_0)=H(x_0)K(x_0)=n.$ In the first case, we have that $\sss_n^2\equiv0,$ which gives that $\lan X,N\ran=0$ and $\Sigma^n$ is a hyperplane of $\R^{n+1}.$ In the second case, by \eqref{gauss-eigen}, p.\pageref{gauss-eigen}, $\sss_{n-1}(A_i)\neq 0$ at $x_0$ for every $i=1,\ldots,n.$ Thus, $P_{n-1}$ is positive definite in a neighborhood of $x_0$ and, by \ref{Lr-squared}, $\LL_{n-1}\sigma_n^2>0.$ Therefore, by the classical Hopf maximum principle, $\sss_n^2$ is constant. The results comes following the conclusion of the proof of Theorem \ref{theo-main-1}.
\end{proof}

\section{Proof of Theorem \ref{theo-main-2}}\label{sec3}
Let $\Sigma^n$ be a $n$-dimensional Riemannian manifold, $f:\Sigma^n\to\R$ be a class $\CC^2$ function, and $\phi:T\Sigma^n\to T\Sigma^n$ be a linear symmetric tensor. Define the second-order differential operator
\[
\mathcal{L}_{\phi} f := \tr(\phi\hess f) - \lan V,\n f\ran,
\] 
where $V$ is a vector field defined on $\Sigma^n$. 

The following maximum principle is a slight extension of Theorem 1, p.246, of \cite{BP} and we include a proof here for the sake of completeness.
\begin{lemma}\label{max-princ-intrinsic}
Let $\Sigma^n$ be an $n$-dimensional complete Riemannian manifold and $\phi:T\Sigma^n\to T\Sigma^n$ be a symmetric and positive semidefinite linear tensor. If there exists a positive function $\gamma\in \CC^2(\Sigma^n)$ such that
\begin{itemize}
\item[(i)] $\gamma(x)\to\infty$ when $x\to\infty;$
\item[]
\item[(ii)] $\displaystyle{\limsup_{x\to\infty}\mathcal{L}_\phi \gamma(x)<\infty};$
\item[]
\item[(iii)] $\displaystyle{\limsup_{x\to\infty}\|\n\gamma(x)\|<\infty},$
\end{itemize}
\vspace{2mm}
then, for every function $u\in\CC^2(\Sigma^n)$ satisfying
\begin{equation}\label{hyp-u}
\lim_{x\to\infty}\frac{u(x)}{\gamma(x)}=0,
\end{equation}
there exists a sequence of points $x_k\in\Sigma^n$ such that
\begin{equation}\label{eq-OY}
\|\n u(x_k)\|<\frac{1}{k} \quad \mbox{and} \quad  \mathcal{L}_{\phi} u(x_k)<\frac{1}{k}.
\end{equation}
Moreover, if instead of \eqref{hyp-u} we assume that $u^\ast=\sup_{\Sigma^n} u<\infty$, then
\[
\lim_{k\to\infty} u(x_k) = u^\ast.
\] 
\end{lemma}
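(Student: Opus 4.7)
The plan is a standard Omori--Yau type perturbation argument. First I would upgrade hypotheses (ii) and (iii) to global bounds: continuity of $\|\n\gamma\|$ and $\mathcal{L}_\phi\gamma$ on compact sets, together with the $\limsup$ assumptions at infinity, yields a constant $C>0$ such that $\|\n\gamma(x)\|\leq C$ and $\mathcal{L}_\phi\gamma(x)\leq C$ for every $x\in\Sigma^n$.

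Given $k\geq 1$, I would set $\ve_k=1/(kC)$ and consider the perturbed function $h_k:=u-\ve_k\gamma$. The crucial observation is that $h_k$ attains its maximum at some $x_k\in\Sigma^n$: the assumption $\lim_{x\to\infty} u(x)/\gamma(x)=0$ gives $u(x)\leq(\ve_k/2)\gamma(x)$ outside a sufficiently large compact set, so $h_k(x)\leq-(\ve_k/2)\gamma(x)\to-\infty$ as $x\to\infty$ by (i), and the existence of a finite maximum follows from continuity.

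At $x_k$, the first-order condition $\n h_k(x_k)=0$ gives $\n u(x_k)=\ve_k\n\gamma(x_k)$, hence $\|\n u(x_k)\|\leq \ve_k C = 1/k$. The Hessian of $h_k$ at $x_k$ is negative semidefinite, and $\phi$ is positive semidefinite, so diagonalizing $\phi$ in an orthonormal eigenbasis --- exactly as in the lemma immediately preceding this statement --- yields $\tr(\phi\,\hess h_k)(x_k)\leq 0$. Combined with the first-order identity for the drift term, this produces $\mathcal{L}_\phi u(x_k)\leq \ve_k\mathcal{L}_\phi\gamma(x_k)\leq \ve_k C = 1/k$, which establishes \eqref{eq-OY}.

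For the moreover part, note that $u^\ast<\infty$ together with $\gamma\to\infty$ automatically implies $u/\gamma\to 0$, so the sequence $\{x_k\}$ constructed above is available. For any $y\in\Sigma^n$, the maximum inequality $h_k(x_k)\geq h_k(y)$ combined with $\gamma(x_k)>0$ yields $u(x_k)\geq u(y)-\ve_k\gamma(y)$; letting $k\to\infty$ for fixed $y$ gives $\liminf_k u(x_k)\geq u(y)$, and taking the supremum over $y$ produces $\liminf_k u(x_k)\geq u^\ast$. Combined with the trivial bound $u(x_k)\leq u^\ast$, this forces $u(x_k)\to u^\ast$. The main subtlety is the coupled choice of $\ve_k$ that simultaneously makes both $\|\n u(x_k)\|$ and $\mathcal{L}_\phi u(x_k)$ of order $1/k$, which is precisely what the uniform constant $C$ from the first step affords.
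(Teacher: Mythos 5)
Your proof follows the same perturbation strategy as the paper's: subtract $\ve_k\gamma$ to form $h_k=u-\ve_k\gamma$, deduce from (i) and the hypothesis on $u/\gamma$ that $h_k\to-\infty$ so a genuine maximum $x_k$ exists, read off the first- and second-order conditions at $x_k$, and use the positive semidefiniteness of $\phi$ (diagonalized in an orthonormal eigenbasis) to compare $\mathcal{L}_\phi u(x_k)$ with $\ve_k\mathcal{L}_\phi\gamma(x_k)$. The only technical nit is that to obtain the \emph{strict} inequalities claimed you should take $\ve_k$ strictly smaller than $1/(kC)$ rather than equal to it; otherwise the argument, including the direct $\liminf$ treatment of the last assertion, matches the paper's.
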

\begin{proof}
Let
\[
f_k(x)=u(x)-\ve_k\gamma(x),
\]
for each positive integer $k,$ where $\ve_k>0$ is a sequence satisfying $\ve_k\to 0,$ when $k\to\infty.$ Since, for a fixed $x_0\in\Sigma^n,$ the sequence $\{f_k(x_0)\}_k$, defined by $f_k(x_0)=u(x_0)-\ve_k\gamma(x_0)$ is bounded, adding a positive constant to the function $u,$ if necessary, we may assume that $f_k(x_0)>0$ for every $k>0$. Notice that, by \eqref{hyp-u}, 
%\[\frac{f_k(x_0)}{\gamma(x_0)}>0, \quad
%\frac{f_k(x)}{\gamma(x)}= \frac{u(x)}{\gamma(x)}-\ve_k
%\]
\[
\lim_{x\to\infty}\frac{f_k(x)}{\gamma(x)}=\lim_{x\to\infty}\frac{u(x)}{\gamma(x)}-\ve_k=-\ve_k< 0,
\] 
which implies that $f_k$ is non-positive out of a compact set $\Omega_{k}\subset\Sigma^n$ containing $x_0$. Thus, $f_k$ achieves its maximum at a point $x_k\in \Omega_{k}$ for each $k\geq 1$. Since
\[
\n f_k = \n u - \ve_k\n\gamma
\]
and
\[
\hesss f_k (v,v)= \hesss u (v,v) - \ve_k\hesss\gamma(v,v),
\]
we have, at $x_k,$ that
\begin{equation}\label{eqPeng1}
\n u(x_k)=\ve_k\n\gamma(x_k) \quad \mbox{and} \quad \hesss u(x_k)(v,v)\leq \ve_k\hesss\gamma(x_k)(v,v).
\end{equation}
First, observe that \eqref{eqPeng1} and hypothesis (iii) imply
\[
\|\n u(x_k)\|=\ve_k\|\n\gamma(x_k)\|\leq \ve_k C_0 <\frac{1}{k}
\] 
for some $C_0>0$ and $\ve_k<\frac{1}{kC_0}.$ On the other hand, letting   $\{e_1,\ldots,e_n\}$ be  an orthonormal frame formed with  eigenvectors of $\phi\colon T\Sigma^n\to T\Sigma^n,$ with  nonnegative eigenvalues $\lambda_1,\ldots,\lambda_n,$  we have
\[
\aligned
\mathcal{L}_{\phi}  u(x_k)&= \sum_{i=1}^n\lan\hess u(x_k)(e_i),\phi(e_i)\ran - \lan V(x_k),\n u(x_k)\ran\\
&= \sum_{i=1}^n\lambda_i\lan\hess u(x_k)(e_i),e_i\ran- \lan V(x_k),\n u(x_k)\ran\\
& = \sum_{i=1}^n\lambda_i\hesss u(x_k)(e_i,e_i)- \lan V(x_k),\n u(x_k)\ran\\
&\leq \ve_k\sum_{i=1}^n\lambda_i\hesss\gamma(x_k)(e_i,e_i) - \ve_k\lan V(x_k),\n \gamma(x_k)\ran\\
&=\ve_k\mathcal{L}_{\phi}\gamma(x_k)\\
&\leq \ve_k C_1<\frac{1}{k}\raisepunct{,}
\endaligned
\]
if we take $$\ve_k<\frac{1}{k\max\{C_0,C_1\}}\raisepunct{.}$$ 
\vspace{2mm}
If $=u^\ast=\sup_{\Sigma^n}u(x)<\infty,$ then, given an arbitrary integer $m>0,$ let $y_m\in\Sigma^n$ such that
\[
u(y_m)>u^\ast - \frac{1}{2m}.
\]
This gives
\[
\aligned
f_k(x_k)&=u(x_k)-\ve_k\gamma(x_k)\geq f_k(y_m)\\
&=u(y_m)-\ve_k\gamma(y_m)\\
&>u^\ast - \frac{1}{2m}-\ve_k\gamma(y_m),
\endaligned
\]
which implies
\[
\aligned
u(x_k)&>u^\ast - \frac{1}{2m}-\ve_k\gamma(y_m)+\ve_k\gamma(x_k)\\
&>u^\ast - \frac{1}{2m}-\ve_k\gamma(y_m).\\
\endaligned
\]
Now, choosing $k_m$ such that $\ve_{k_m}\gamma(y_m)<\frac{1}{2m},$ we obtain that
\[
u(x_{k_m})>u^\ast -\frac{1}{m}.
\]
Thus, by replacing $x_k$ by $x_{k_m}$ if necessary, we can conclude that 
\[
\lim_{k\to\infty}u(x_k)=u^\ast.
\]
\end{proof}

As an application of Lemma \ref{max-princ-intrinsic}, we have the

\begin{lemma}\label{COR-max-princ-2}
Let $\Sigma^n$ be an $n$-dimensional complete hypersurface of $\R^{n+1}$ such that $\sup_{\Sigma^n}\|A\|^2<\infty.$ If $P_{r-1}:T\Sigma^n\to T\Sigma^n$ is a positive semidefinite linear tensor, then, for every function $u\in\CC^2(\Sigma^n)$ bounded from above, there exists a sequence of points $x_k\in\Sigma^n$ such that
\begin{equation}\label{eq-OY}
\lim_{k\to\infty}u(x_k)=\sup_{\Sigma^n}u,\quad\|\n u(x_k)\|<\frac{1}{k} \quad \mbox{and} \quad \LL_{r-1} u(x_k)<\frac{1}{k}.
\end{equation}
\end{lemma}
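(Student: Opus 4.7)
The plan is to reduce this lemma to a direct application of Lemma \ref{max-princ-intrinsic}. Since $\nabla u$ is tangent to $\Sigma^n$, one has $\langle X,\nabla u\rangle=\langle X^\top,\nabla u\rangle$, so $\LL_{r-1}$ fits the template $\LL_\phi$ of Lemma \ref{max-princ-intrinsic} with $\phi=P_{r-1}$ (positive semidefinite by hypothesis) and $V=X^\top$. The only task is to exhibit $\gamma\in\CC^2(\Sigma^n)$ satisfying conditions (i)--(iii) of that lemma; once $\gamma$ is in hand, $u$ being bounded above gives $u/\gamma\to 0$ at intrinsic infinity, and the ``moreover'' clause of Lemma \ref{max-princ-intrinsic} yields both the sequence in \eqref{eq-OY} and $u(x_k)\to\sup_{\Sigma^n} u$.

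To construct $\gamma$, I would exploit that $\sup_{\Sigma^n}\|A\|^2\le K<\infty$ implies, via the Gauss equation, that the intrinsic sectional curvature satisfies $|\sect_{\Sigma^n}|\le K$. In particular the sectional curvature is bounded below, so the Hessian comparison theorem applies. Fixing a base point $x_0$ and letting $r(x)=d_{\Sigma^n}(x,x_0)$, a Greene--Wu type smoothing (or a Calabi trick to handle the cut locus) produces $\tilde r\in\CC^\infty(\Sigma^n)$ with $|\tilde r-r|\le 1$, $|\nabla\tilde r|\le C_1$, and $\hesss\tilde r\le C_2\,g$ in the sense of symmetric bilinear forms. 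I then set
\[
\gamma(x)=\log\bigl(2+\tilde r(x)^2\bigr).
\]
Condition (i) is immediate from completeness of $\Sigma^n$, and a short computation shows that both $\|\nabla\gamma\|$ and the largest eigenvalue of $\hesss\gamma$ decay like $1/\tilde r$, which gives (iii). Since the bound $\|A\|^2\le K$ controls every principal curvature and hence the operator norm of $P_{r-1}$, the inequality $P_{r-1}\ge 0$ combined with the upper bound on $\hesss\gamma$ yields a uniform bound on $\tr(P_{r-1}\hesss\gamma)$.

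The main obstacle is the drift term $\langle X,\nabla\gamma\rangle$, since the position vector $X$ is extrinsic and may be unbounded on $\Sigma^n$ (properness is \emph{not} assumed). The key observation is that the isometric immersion $X\colon\Sigma^n\to\R^{n+1}$ contracts distances:
\[
\|X(x)-X(x_0)\|\le d_{\Sigma^n}(x,x_0)\le\tilde r(x)+1,
\]
so $\|X(x)\|$ grows at most linearly in $\tilde r$. The logarithmic choice of $\gamma$ is designed precisely so that this linear growth is absorbed by $\|\nabla\gamma\|$, which decays like $1/\tilde r$, yielding $|\langle X,\nabla\gamma\rangle|$ uniformly bounded and hence $\limsup_{x\to\infty}\LL_{r-1}\gamma<\infty$. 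With (i)--(iii) verified, Lemma \ref{max-princ-intrinsic} delivers the required sequence, completing the proof.
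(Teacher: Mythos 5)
Your proposal follows essentially the same route as the paper's own proof: apply Lemma \ref{max-princ-intrinsic} with $\phi=P_{r-1}$, a logarithm of the intrinsic distance as the Khasminskii function, the Gauss equation to bound sectional curvature below by $-2\sup\|A\|^2$ so that Hessian comparison gives $\hesss\rho\lesssim 1$, and the key observation that $\|X(x)-X(x_0)\|\le d_{\Sigma^n}(x,x_0)$ so the extrinsic drift grows at most linearly and is absorbed by the $1/\rho$ decay of $\nabla\gamma$. The only differences are cosmetic: the paper takes $\gamma=\log(\rho+2)$ and handles the cut locus with the Calabi trick (and plugs in $V=X$ directly rather than $X^\top$, which is equivalent since only $\langle V,\nabla u\rangle$ is used), whereas you smooth the distance by a Greene--Wu argument and take $\gamma=\log(2+\tilde r^2)$; both do the same job.
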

\begin{proof}
Let us apply Lemma \ref{max-princ-intrinsic} to $\phi=P_{r-1},$ $V=X,$ the position vector of $\Sigma^n$ in $\R^{n+1},$ and $\gamma(x)=\log(\rho(x)+2),$ where $\rho(x)=\dist(x,x_0)$ is the geodesic distance of $\Sigma^n$ to a fixed point $x_0\in\Sigma^n.$ Let $\{e_1,\ldots,e_n\}$ be an orthonormal frame of principal directions of $\Sigma^n$ and denote by $\lambda_1,\ldots,\lambda_n,$ the eigenvalues of $P_{r-1}.$  Notice that, since the extrinsic distance is less than or equal to the intrinsic distance, we have $\|X(x)-X(x_0)\|\leq \rho(x)<\rho(x)+2.$ This gives
\[
\frac{\|X(x)\|}{\rho(x)+2} \leq \frac{\|X(x)-X(x_0)\|}{\rho(x)+2}+\frac{\|X(x_0)\|}{\rho(x)+2}\leq 1 + c_0,
\] 
where $c_0=\sup_{\Sigma}\frac{\|X(x_0)\|}{\rho(x)+2}.$ Moreover,  by the Gauss equation,
\[
K(e_i\wedge e_j)=\lan A(e_i),e_i\ran \lan A(e_j),e_j\ran - \lan A(e_i),e_j\ran^2\geq -2\|A\|^2 \geq -C,
\]
where $C:=2\sup_{\Sigma^n}\|A\|^2.$ Since, for points outside of the cut locus of $x_0,$
\begin{equation}\label{grad-hess-rho}
\n\gamma(x)=\frac{\n\rho(x)}{\rho(x)+2} \quad \mbox{and} \quad \hesss\gamma(x)(v,v)=\frac{\hesss\rho(x)}{\rho(x)+2}-\frac{\lan\n\rho(x),v\ran^2}{(\rho(x)+2)^2}\raisepunct{,}
\end{equation}
we have that 
\[
\limsup_{x\to\infty}\|\n\gamma(x)\|=\limsup_{x\to\infty}\frac{1}{\rho(x)+2}=0
\] 
and, by the hessian comparison theorem, 
\[
\aligned
\mathcal{L}_{r-1}\gamma(x)&=\sum_{i=1}^n \lambda_i\hesss \gamma(x)(e_i,e_i) - \lan X(x),\n\gamma(x)\ran\\
&=\sum_{i=1}^n \lambda_i\left[\frac{\hesss\rho(x)(e_i,e_i)}{\rho(x)+2} -  \frac{\lan\n\rho(x),e_i\ran^2}{(\rho(x)+2)^2}\right] - \frac{\lan X(x),\n\rho(x)\ran}{\rho(x)+2}\\
&\leq\sum_{i=1}^n \lambda_i\frac{\hesss\rho(x)(e_i,e_i)}{\rho(x)+2} - \frac{\lan X(x),\n\rho(x)\ran}{\rho(x)+2}\\
&\leq \frac{\sqrt{C}\coth(\sqrt{C}\rho(x))}{\rho(x)+2}\sum_{i=1}^n \lambda_i[\lan e_i,e_i\ran - \lan\n \rho(x),e_i\ran^2] + \frac{\|X(x)\|}{\rho(x)+2}\\
&\leq\frac{2\sqrt{C}\tr(P_{r-1}(x))}{\rho(x)+2} +1+c_0<\infty,\\
\endaligned
\]
where we used that $P_{r-1}$ is positive semidefinite and bounded and that $\coth(\sqrt{C}\rho)<2$ for $\rho\gg 1$. For points in the cut locus of $x_0$ we use the Calabi trick as it was done by Cheng and Yau in \cite{CY}, p.341-342. The result then follows from Lemma \ref{max-princ-intrinsic}.
\end{proof}

%\section{Proof of Theorem \ref{theo-main-2}}\label{sec5}

We conclude the paper with the proof of Theorem \ref{theo-main-2}:

\begin{proof}[Proof of Theorem \ref{theo-main-2}]
If $\sup(\|\sqrt{P_{r-1}}A\|^2)\geq r$ there is nothing to prove. If $\sup\|\sqrt{P_{r-1}}A\|^2<r,$ then, by \eqref{bound-sr}, p.\pageref{bound-sr},
\[
\aligned
r^2\sss_r^2&\leq \left[\tr(P_{r-1}A)\right]^2\\
&\leq \tr(P_{r-1}A^2)\tr(P_{r-1})\\
&<r\tr(P_{r-1})<\infty,
\endaligned
\]
since  $\|A\|$ is bounded $P_{r-1}$ is bounded. Using Lemma \ref{COR-max-princ-2} in \eqref{Lr-squared-0}, p.\pageref{Lr-squared-0}, we have
\[
\aligned
0&\geq \limsup \LL_{r-1}\sss_r^2\\
&=\sup \sss_r^2\sup[r-\|\sqrt{P_{r-1}}A\|^2]\\
&\geq \sup \sss_r^2 [r-\sup\|\sqrt{P_{r-1}}A\|^2]\\
&\geq 0.
\endaligned
\]
This gives
\[
\sup \sss_r^2=0,
\]
i.e., $-\lan X,N\ran=\sss_r=0$ and, thus $\Sigma^n$ is a hyperplane.
\end{proof}

%\begin{remark}\label{general-1}
%\textcolor{red}{Theorem \ref{max-princ-Lr} can be extended to immersions in a space form $\QQ^{n+1}_\kappa$ of constant sectional curvature $\kappa,$ by changing $X-X(p),$ in the definition of $\LL_{r-1}^Q,$ by the conformal field $\sss_\kappa(\gamma)\overline{\n}\gamma,$ where $\gamma$ is the distance function on $\QQ_\kappa^{n+1}$ and $\sss_\kappa$ is the solution of $\sss_\kappa''+\kappa \sss_\kappa=0.$   More generally, it can be extended to immersions in Riemannian manifolds with sectional curvature bounded from below and replacing $X-X(p)$ by $G(\gamma)\overline{\n}\gamma,$ where $G$ is a suitable function.}
%\end{remark}

\begin{bibdiv}
\begin{biblist}

\bib{Alencar-Colares}{article}{
   author={Alencar, Hilario},
   author={Colares, A. Gervasio},
   title={Integral formulas for the $r$-mean curvature linearized operator
   of a hypersurface},
   journal={Ann. Global Anal. Geom.},
   volume={16},
   date={1998},
   number={3},
   pages={203--220},
   issn={0232-704X},
   review={\MR{1626663}},
   doi={10.1023/A:1006555603714},
}

\bib{AdCE}{article}{
   author={Alencar, Hil\'{a}rio},
   author={do Carmo, Manfredo},
   author={Elbert, Maria Fernanda},
   title={Stability of hypersurfaces with vanishing $r$-mean curvatures in
   Euclidean spaces},
   journal={J. Reine Angew. Math.},
   volume={554},
   date={2003},
   pages={201--216},
   issn={0075-4102},
   review={\MR{1952173}},
   doi={10.1515/crll.2003.006},
}

\bib{AdCS}{article}{
   author={Alencar, Hil\'{a}rio},
   author={do Carmo, Manfredo},
   author={Santos, Walcy},
   title={A gap theorem for hypersurfaces of the sphere with constant scalar
   curvature one},
   journal={Comment. Math. Helv.},
   volume={77},
   date={2002},
   number={3},
   pages={549--562},
   issn={0010-2571},
   review={\MR{1933789}},
   doi={10.1007/s00014-002-8351-1},
}

\bib{ASZ}{article}{
   author={Alencar, Hil\'{a}rio},
   author={Santos, Walcy},
   author={Zhou, Detang},
   title={Curvature integral estimates for complete hypersurfaces},
   journal={Illinois J. Math.},
   volume={55},
   date={2011},
   number={1},
   pages={185--203 (2012)},
   issn={0019-2082},
   review={\MR{3006685}},
}

%\bib{ANZ-Bull}{article}{
%   author={Alencar, Hil\'{a}rio},
%   author={Silva Neto, Greg\'{o}rio},
%   author={Zhou, Detang},
%   title={Eigenvalue estimates for a class of elliptic differential
%   operators on compact manifolds},
%   journal={Bull. Braz. Math. Soc. (N.S.)},
%   volume={46},
%   date={2015},
%   number={3},
%   pages={491--514},
%   issn={1678-7544},
%   review={\MR{3403553}},
%   doi={10.1007/s00574-015-0102-1},
%}

\bib{AS2010}{article}{
   author={Alessandroni, Roberta},
   author={Sinestrari, Carlo},
   title={Evolution of hypersurfaces by powers of the scalar curvature},
   journal={Ann. Sc. Norm. Super. Pisa Cl. Sci. (5)},
   volume={9},
   date={2010},
   number={3},
   pages={541--571},
   issn={0391-173X},
   review={\MR{2722655}},
}

\bib{ABS}{article}{
   author={Al\'{\i}as, Luis J.},
   author={Brasil, Aldir, Jr.},
   author={Sousa, Luiz A. M., Jr.},
   title={A characterization of Clifford tori with constant scalar curvature
   one by the first stability eigenvalue},
   journal={Bull. Braz. Math. Soc. (N.S.)},
   volume={35},
   date={2004},
   number={2},
   pages={165--175},
   issn={1678-7544},
   review={\MR{2081021}},
   doi={10.1007/s00574-004-0009-8},
}
\bib{AIR}{article}{
   author={Al\'{\i}as, Luis J.},
   author={Impera, D.},
   author={Rigoli, M.},
   title={Hypersurfaces of constant higher order mean
curvature in warped products.},
   journal={Trans. Amer. Math. Soc.},
   volume={365},
   date={2013},
   number={2},
   pages={591--621},
   issn={},
   review={},
   doi={},
}

\bib{AMR}{book}{
   author={Al\'{\i}as, Luis J.},
   author={Mastrolia, Paolo},
   author={Rigoli, Marco},
   title={Maximum principles and geometric applications},
   series={Springer Monographs in Mathematics},
   publisher={Springer, Cham},
   date={2016},
   pages={xvii+570},
   isbn={978-3-319-24335-1},
   isbn={978-3-319-24337-5},
   review={\MR{3445380}},
   doi={10.1007/978-3-319-24337-5},
}

\bib{AMC2012}{article}{
   author={Andrews, Ben},
   author={McCoy, James},
   title={Convex hypersurfaces with pinched principal curvatures and flow of
   convex hypersurfaces by high powers of curvature},
   journal={Trans. Amer. Math. Soc.},
   volume={364},
   date={2012},
   number={7},
   pages={3427--3447},
   issn={0002-9947},
   review={\MR{2901219}},
   doi={10.1090/S0002-9947-2012-05375-X},
}

\bib{AW2021}{article}{
   author={Andrews, Ben},
   author={Wei, Yong},
   title={Volume preserving flow by powers of the $k$-th mean curvature},
   journal={J. Differential Geom.},
   volume={117},
   date={2021},
   number={2},
   pages={193--222},
   issn={0022-040X},
   review={\MR{4214340}},
   doi={10.4310/jdg/1612975015},
}

\bib{BC}{article}{
   author={Barbosa, Jo\~{a}o Lucas Marques},
   author={Colares, Ant\^{o}nio Gervasio},
   title={Stability of hypersurfaces with constant $r$-mean curvature},
   journal={Ann. Global Anal. Geom.},
   volume={15},
   date={1997},
   number={3},
   pages={277--297},
   issn={0232-704X},
   review={\MR{1456513}},
   doi={10.1023/A:1006514303828},
}

\bib{BX}{article}{
   author={Batista, M\'{a}rcio},
   author={Xavier, Wagner},
   title={On the rigidity of self-shrinkers of the r-mean curvature flow},
   journal={Comm. Contemp. Math.},
   date={2023},
   doi={10.1142/S0219199723500232},
}

\bib{BS2018}{article}{
   author={Bertini, Maria Chiara},
   author={Sinestrari, Carlo},
   title={Volume preserving flow by powers of symmetric polynomials in the
   principal curvatures},
   journal={Math. Z.},
   volume={289},
   date={2018},
   number={3-4},
   pages={1219--1236},
   issn={0025-5874},
   review={\MR{3830246}},
   doi={10.1007/s00209-017-1995-8},
}
\bib{BP}{article}{
author={ Bessa, G. P.},
author={Pessoa, Leandro F.},
title={Maximum principle for semi-elliptic trace operators and geometric applications},
journal={Bull. Braz Math. Soc, New Series},
volume={45},
date={2014},
number={2},
pages={243-265},
issn={1678-7544-2172},
review={\MR{3249527}},
doi={10.1007/s00574-014-0047-9},
}

%\bib{BPR}{article}{
%   author={Pacelli Bessa, G.,},
%   author={Pessoa, Leandro F.},
%   author={Rigoli, Marco},
%   title={Vanishing theorems, higher order mean curvatures and index estimates for self-shrinkers},
%   journal={Israel J. Math.},
%   volume={226},
%   date={2018},
%   number={2},
%   pages={703--736},
%   issn={0021-2172},
%   review={\MR{3819706}},
%   doi={10.1007/s11856-018-1703-3},
%}

\bib{CRS2010}{article}{
   author={Cabezas-Rivas, Esther},
   author={Sinestrari, Carlo},
   title={Volume-preserving flow by powers of the $m$th mean curvature},
   journal={Calc. Var. Partial Differential Equations},
   volume={38},
   date={2010},
   number={3-4},
   pages={441--469},
   issn={0944-2669},
   review={\MR{2647128}},
   doi={10.1007/s00526-009-0294-6},
}

%\bib{C}{article}{
%   author={Caminha, Antonio},
%   title={On hypersurfaces into Riemannian spaces of constant sectional
%   curvature},
%   journal={Kodai Math. J.},
%   volume={29},
%   date={2006},
%   number={2},
%   pages={185--210},
%   issn={0386-5991},
%   review={\MR{2247430}},
%   doi={10.2996/kmj/1151936435},
%}

\bib{cao-li}{article}{
   author={Cao, Huai-Dong},
   author={Li, Haizhong},
   title={A gap theorem for self-shrinkers of the mean curvature flow in
   arbitrary codimension},
   journal={Calc. Var. Partial Differential Equations},
   volume={46},
   date={2013},
   number={3-4},
   pages={879--889},
   issn={0944-2669},
   review={\MR{3018176}},
   doi={10.1007/s00526-012-0508-1},
}

\bib{CY}{article}{
   author={Cheng, S. Y.},
   author={Yau, S. T.},
   title={Differential equations on Riemannian manifolds and their geometric
   applications},
   journal={Comm. Pure Appl. Math.},
   volume={28},
   date={1975},
   number={3},
   pages={333--354},
   issn={0010-3640},
   review={\MR{0385749}},
   doi={10.1002/cpa.3160280303},
}

\bib{Cheng-Peng}{article}{
   author={Cheng, Qing-Ming},
   author={Peng, Yejuan},
   title={Complete self-shrinkers of the mean curvature flow},
   journal={Calc. Var. Partial Differential Equations},
   volume={52},
   date={2015},
   number={3-4},
   pages={497--506},
   issn={0944-2669},
   review={\MR{3311901}},
   doi={10.1007/s00526-014-0720-2},
}

\bib{CR}{article}{
   author={Cheng, Xu},
   author={Rosenberg, Harold},
   title={Embedded positive constant $r$-mean curvature hypersurfaces in
   $M^m\times{\bf R}$},
   language={English, with English and Portuguese summaries},
   journal={An. Acad. Brasil. Ci\^{e}nc.},
   volume={77},
   date={2005},
   number={2},
   pages={183--199},
   issn={0001-3765},
   review={\MR{2137392}},
   doi={10.1590/S0001-37652005000200001},
}

\bib{Cheng-Zhou}{article}{
   author={Cheng, Xu},
   author={Zhou, Detang},
   title={Volume estimate about shrinkers},
   journal={Proc. Amer. Math. Soc.},
   volume={141},
   date={2013},
   number={2},
   pages={687--696},
   issn={0002-9939},
   review={\MR{2996973}},
   doi={10.1090/S0002-9939-2012-11922-7},
}

\bib{CZ2017}{article}{
   author={Cheng, Xu},
   author={Zhou, Detang},
   title={Eigenvalues of the drifted Laplacian on complete metric measure
   spaces},
   journal={Commun. Contemp. Math.},
   volume={19},
   date={2017},
   number={1},
   pages={1650001, 17},
   issn={0219-1997},
   review={\MR{3575913}},
   doi={10.1142/S0219199716500012},
}

\bib{CZ2021}{article}{
   author={Cheng, Xu},
   author={Vieira, Matheus},
   author={Zhou, Detang},
   title={Volume growth of complete submanifolds in gradient Ricci solitons
   with bounded weighted mean curvature},
   journal={Int. Math. Res. Not. IMRN},
   date={2021},
   number={16},
   pages={12748--12777},
   issn={1073-7928},
   review={\MR{4300234}},
   doi={10.1093/imrn/rnz355},
}

\bib{Chow1987}{article}{
   author={Chow, Bennett},
   title={Deforming convex hypersurfaces by the square root of the scalar
   curvature},
   journal={Invent. Math.},
   volume={87},
   date={1987},
   number={1},
   pages={63--82},
   issn={0020-9910},
   review={\MR{0862712}},
   doi={10.1007/BF01389153},
}

\bib{DT}{article}{
   author={Dajczer, Marcos},
   author={Tojeiro, Ruy},
   title={Hypersurfaces with a constant support function in spaces of
   constant sectional curvature},
   journal={Arch. Math. (Basel)},
   volume={60},
   date={1993},
   number={3},
   pages={296--299},
   issn={0003-889X},
   review={\MR{1201645}},
   doi={10.1007/BF01198815},
}

\bib{GLM2018}{article}{
   author={Gao, Shanze},
   author={Li, Haizhong},
   author={Ma, Hui},
   title={Uniqueness of closed self-similar solutions to
   $\sigma_k^{\alpha}$-curvature flow},
   journal={NoDEA Nonlinear Differential Equations Appl.},
   volume={25},
   date={2018},
   number={5},
   pages={Paper No. 45, 26},
   issn={1021-9722},
   review={\MR{3845754}},
   doi={10.1007/s00030-018-0535-5},
}
\bib{GLW2017}{article}{
   author={Guo, Shunzi},
   author={Li, Guanghan},
   author={Wu, Chuanxi},
   title={Volume-preserving flow by powers of the $m$-th mean curvature in
   the hyperbolic space},
   journal={Comm. Anal. Geom.},
   volume={25},
   date={2017},
   number={2},
   pages={321--372},
   issn={1019-8385},
   review={\MR{3690244}},
   doi={10.4310/CAG.2017.v25.n2.a3},
}

\bib{HL1}{article}{
   author={Hounie, Jorge},
   author={Leite, Maria Luiza},
   title={The maximum principle for hypersurfaces with vanishing curvature
   functions},
   journal={J. Differential Geom.},
   volume={41},
   date={1995},
   number={2},
   pages={247--258},
   issn={0022-040X},
   review={\MR{1331967}},
}

\bib{HL2}{article}{
   author={Hounie, Jorge},
   author={Leite, Maria Luiza},
   title={Two-ended hypersurfaces with zero scalar curvature},
   journal={Indiana Univ. Math. J.},
   volume={48},
   date={1999},
   number={3},
   pages={867--882},
   issn={0022-2518},
   review={\MR{1736975}},
   doi={10.1512/iumj.1999.48.1664},
}

\bib{LWW2021}{article}{
   author={Li, Haizhong},
   author={Wang, Xianfeng},
   author={Wu, Jing},
   title={Contracting axially symmetric hypersurfaces by powers of the
   $\sigma_k$-curvature},
   journal={J. Geom. Anal.},
   volume={31},
   date={2021},
   number={3},
   pages={2656--2702},
   issn={1050-6926},
   review={\MR{4225822}},
   doi={10.1007/s12220-020-00370-w},
}

\bib{LSW2020}{article}{
   author={Li, Qi-Rui},
   author={Sheng, Weimin},
   author={Wang, Xu-Jia},
   title={Asymptotic convergence for a class of fully nonlinear curvature
   flows},
   journal={J. Geom. Anal.},
   volume={30},
   date={2020},
   number={1},
   pages={834--860},
   issn={1050-6926},
   review={\MR{4058539}},
   doi={10.1007/s12220-019-00169-4},
}
\bib{Omori}{article}{
   author={Omori, H.},
   title={Isometric immersions of Riemannian manifolds},
   journal={J. Math. Soc. Japan},
   volume={19},
   date={1967},
   number={},
   pages={205--214},
   issn={},
   review={},
   doi={},
}
\bib{PRS-2006}{article}{
   author={Pigola, S.},
   author={Rigoli, M.},
   author={Setti, A.},
   title={Some non-linear function theoretic properties
of Riemannian maniofolds.},
   journal={Rev. Mat. Iberoamericana},
   volume={22},
   date={2006},
   number={},
   pages={801--831},
   issn={},
   review={},
   doi={},
}

\bib{Reilly}{article}{
   author={Reilly, Robert C.},
   title={Variational properties of functions of the mean curvatures for
   hypersurfaces in space forms},
   journal={J. Differential Geometry},
   volume={8},
   date={1973},
   pages={465--477},
   issn={0022-040X},
   review={\MR{0341351}},
}

%\bib{Rosen}{article}{
%   author={Rosenberg, Harold},
%   title={Hypersurfaces of constant curvature in space forms},
%   journal={Bull. Sci. Math.},
%   volume={117},
%   date={1993},
%   number={2},
%   pages={211--239},
%   issn={0007-4497},
%   review={\MR{1216008}},
%}

\bib{Urbas1990}{article}{
   author={Urbas, John I. E.},
   title={On the expansion of starshaped hypersurfaces by symmetric
   functions of their principal curvatures},
   journal={Math. Z.},
   volume={205},
   date={1990},
   number={3},
   pages={355--372},
   issn={0025-5874},
   review={\MR{1082861}},
   doi={10.1007/BF02571249},
}

\bib{Urbas1999}{article}{
   author={Urbas, John},
   title={Convex curves moving homothetically by negative powers of their
   curvature},
   journal={Asian J. Math.},
   volume={3},
   date={1999},
   number={3},
   pages={635--656},
   issn={1093-6106},
   review={\MR{1793674}},
   doi={10.4310/AJM.1999.v3.n3.a4},
}

\bib{Yau}{article}{
   author={Yau, Shing Tung},
   title={Harmonic functions on complete Riemannian manifolds},
   journal={Comm. Pure Appl. Math.},
   volume={28},
   date={1975},
   pages={201--228},
   issn={0010-3640},
   review={\MR{0431040}},
   doi={10.1002/cpa.3160280203},
}
\bib{Z2013}{article}{
   author={Zhao, Liang},
   title={The first eigenvalue of $p$-Laplace operator under powers of the
   $m$th mean curvature flow},
   journal={Results Math.},
   volume={63},
   date={2013},
   number={3-4},
   pages={937--948},
   issn={1422-6383},
   review={\MR{3057347}},
   doi={10.1007/s00025-012-0242-1},
}

\end{biblist}
\end{bibdiv}

\end{document}